\renewcommand{\P}{\mathcal{P}}
\newcommand{\F}{\mathcal{F}}
\newcommand{\K}{\mathcal{K}}
\newcommand{\C}{\mathcal{C}}
\newcommand{\G}{\mathcal{G}}
\newcommand{\R}{\mathbb{R}}
\renewcommand{\mod}[1]{\ (\mathrm{mod}\ #1)}
\def\aftermath{\par\vspace{-\belowdisplayskip}\vspace{-\parskip}\vspace{-\baselineskip}}
\def\haftermath{\par\vspace{-\belowdisplayskip}\vspace{-\parskip}\vspace{-.7\baselineskip}}
\newtheorem{thm}{Theorem}
\newtheorem{thmB}{Theorem}
\newtheorem{lem}[thm]{Lemma}
\newtheorem{prop}[thm]{Proposition}
\newtheorem{conj}[thm]{Conjecture}
\newtheorem{dfn}{Definition}
\title{On Asymptotic Packing of Geometric Graphs}
\author{Daniel W. Cranston\footnote{%
Department of Computer Science, Virginia Commonwealth
University, Richmond, VA, USA;
{\tt dcranston@vcu.edu}}
\and
Jiaxi Nie \footnote{Department of Mathematics, UCSD, San Diego, CA, USA; {\tt jin019@ucsd.edu}} \and
Jacques Verstra\"ete \footnote{Department of Mathematics, UCSD, San Diego, CA, USA; {\tt
jacques@ucsd.edu}; Research supported by NSF award DMS-1800332.} \and Alexandra Wesolek \footnote{Department of Mathematics, 
Simon Fraser University, Burnaby, Canada; {\tt agwesole@sfu.ca}; Supported by the Vanier Canada Graduate Scholarships program.}}
\date{\today}
\begin{document}

\maketitle

\begin{abstract}
    A set of geometric graphs is {\em geometric-packable} if it can be asymptotically
packed into every sequence of drawings of the complete graph $K_n$. For example, the set of geometric triangles is geometric-packable due to the existence of Steiner Triple Systems. When $G$ is the $4$-cycle (or $4$-cycle with a chord), we show
that the set of plane drawings of $G$ is geometric-packable. In contrast, the analogous statement is false when $G$ is nearly any other planar Hamiltonian graph (with at most 3 possible exceptions). A
convex geometric graph is {\em convex-packable} if it can be asymptotically
packed into the
convex drawings of the complete graphs.  For each planar Hamiltonian graph $G$, we determine whether or
not a plane $G$ is convex-packable. Many of our proofs explicitly construct these packings; in these cases, the
packings exhibit a symmetry that mirrors the vertex transitivity of $K_n$.
\end{abstract}

\section{Introduction}

A hypergraph $H$ is a pair $(V,E)$ such that $V$ is a set of 
\emph{vertices} and $E$ is a set of subsets of $V$ called \emph{edges}. We
denote by $v(H)$ and $e(H)$ the numbers of vertices and edges in $H$. An
\emph{{$r$}-uniform hypergraph} (\emph{{$r$}-graph} for short) is a hypergraph
where every edge has {$r$} vertices.  When ${r}=2$, this is just a graph.

Let $H$ be a graph and $G$ be a smaller graph. A {\em $G$-packing} of $H$ is a
collection of edge-disjoint subgraphs of $H$ that are isomorphic to $G$. 
We typically seek the maximum cardinality of a $G$-packing of $H$, denoted
by $p(H,G)$. Clearly $p(H,G)\le e(H)/e(G)$. When
equality holds, we have a {\em perfect $G$-packing} of $H$. Let $C_3$ denote a
$3$-cycle, or triangle. Perfect $C_3$-packings of the complete graphs $K_n$ are
called Steiner Triple Systems. It is well known that Steiner Triple Systems
exist if and only if $n\equiv 1$ or $3\mod 6$~\cite{Kirkman1847}. For an arbitrary graph $G$ and any integer $n$ sufficiently large, Wilson~\cite{Wilson1976} gave a necessary and sufficient condition for $K_n$ to have a perfect $G$-packing. In a recent breakthrough, Montgomery, Pokrovskiy,
and Sudakov \cite{montgomery2021proof} proved Ringel's conjecture for sufficiently large $n$, which states that any tree on $n+1$ vertices can be perfectly packed into $K_{2n+1}$. %
Let $\{H_n\}_{n\ge 1}$ be a sequence of graphs such that $v(H_n)=n$. 
Now $H_n$ can be {\em asymptotically packed} by $G$ if
\begin{align}
&\lim_{n\rightarrow\infty}\frac{p(H_n,G)e(G)}{e(H_n)}=1.
\label{asymptotic-packing-def}
\end{align}

Further, $G$ is {\em packable} when $G$ asymptotically packs into the complete
graphs $K_n$.  In other words, $G$ is packable if there
exist $G$-packings of $K_n$ covering all but $o(n^2)$ edges.
Using R\"odl Nibble~\cite{Rodl1985}, one can easily show that all graphs are packable.  See~\cite{Yuster2007} for a survey on graph packing problems.

In this paper, we study 
asymptotic packing problems for geometric graphs, specifically for convex geometric graphs.

\subsection{Geometric Graphs}
A {\em geometric graph} $G$ is a graph
whose vertex set $V(G)$ consists of $n$ points in general position (no three
points on a line) in the Euclidean plane and whose edges are
closed line segments, each with endpoints in $V(G)$.
Two geometric graphs are \emph{congruent} to each other if one can be
transformed into the other by a translation and/or a rotation. A geometric
graph is {\em plane} if no two of its edges cross. An abstract graph is {\em planar} if it has a plane geometric drawing. For any abstract planar graph $G$, let $\P(G)$ denote the set of all plane geometric drawings of $G$. Moreover, let $\P^*(G)$ be the set of all plane geometric drawings of $G$ whose vertices are in strictly convex position; note that $\P^*(G)\subset \P(G)$. For example, if $C_4$ is the $4$-cycle and $G_1$, $G_2$, $G_3$ are the geometric graphs in Figure~\ref{fig:Geometric}, then $G_1\not\in\P(C_4)$ and $G_2\in\P^*(C_4)$ and $G_3\in\P(C_4)\setminus\P^*(C_4)$.

\begin{figure}[h]
    \centering
    \begin{tikzpicture}[thick]
        \tikzstyle{uStyle}=[shape = circle, minimum size = 6.0pt, inner sep = 0pt,
        outer sep = 0pt, draw, fill=white]
        \tikzstyle{lStyle}=[shape = rectangle, minimum size = 20.0pt, inner sep = 0pt,
        outer sep = 2pt, draw=none, fill=none]
        \tikzset{every node/.style=uStyle}
        \begin{scope}[xshift=-1.5in]
        \foreach \i in {1,...,4}
            \draw (45+90*\i:1.2cm) node (v\i) {};
            
            \foreach \i/\j in
            {1/3,2/3,2/4,1/4}
            \draw (v\i) edge (v\j);
            
            \draw (v2) edge[below] node[lStyle]{\footnotesize{$G_1$}} (v3);
        \end{scope}
        
        \begin{scope}[xshift=0in]
        \foreach \i in {1,...,4}
            \draw (45+90*\i:1.2cm) node (v\i) {};
            
            \foreach \i/\j in
            {1/2,2/3,3/4,1/4}
            \draw (v\i) edge (v\j);
            
            \draw (v2) edge[below] node[lStyle]{\footnotesize{$G_2$}} (v3);
        \end{scope}
        
        \begin{scope}[xshift=1.5in,yshift=-.11in, yscale=.935]
         \foreach \i in {0,1,2}
            \draw (90+120*\i:1.2cm) node (v\i) {};
        \draw (0,0) node (v3) {};
        \draw (v0)--(v1)--(v2)--(v3)--(v0);
        
          \draw (v1) edge[below] node[lStyle]{\footnotesize{$G_3$}} (v2);
        \end{scope}

    \end{tikzpicture}
    \caption{$G_1\notin\P(C_4)$ and $G_2\in\P^*(C_4)$ and $G_3\in\P(C_4)\setminus\P^*(C_4)$.\label{fig:Geometric}}
\end{figure}
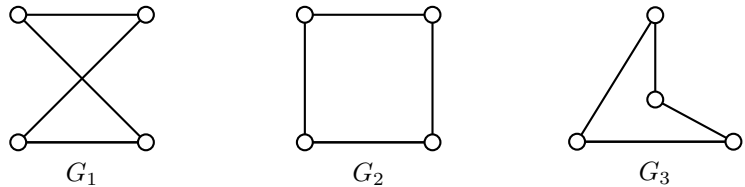

Let $H$ be a geometric graph and let $\G$ be a set of geometric graphs that all have the same number of edges, denoted $e(\G)$. A {\em $\G$-packing}
of $H$ is a collection of edge-disjoint subgraphs of $H$ that are each 
congruent to some member of $\G$. 
Let $p(H,\G)$ be the maximum size of a
$\G$-packing in $H$. Let $\{H_n\}_{n\ge1}$ be a sequence of geometric graphs
such that $v(H_n)=n$. Now $H_n$ can be
\emph{asymptotically packed by $\G$} if 
$$
\lim_{n\rightarrow\infty}\frac{p(H_n,\G)e(\G)}{e(H_n)}=1.
$$

Further, $\G$ is {\em geometric-packable} if any sequence of geometric drawings of complete graphs can be asymptotically packed by $\G$. As stated before, $\P(C_3)$ is geometric-packable, due to the existence of Steiner Triple Systems. To generalize this result, we consider plane geometric packing problems for planar {\em Hamiltonian graphs}, that is, planar graphs that contain a Hamiltonian cycle. We prove the following results.

\begin{thm}\label{thm:GeometricGraphs}
    If $G$ is a planar Hamiltonian graph, then $\P(G)$ is not geometric-packable unless $G$ is the $3$-cycle $C_3$, the $4$-cycle $C_4$, or one of the four graphs $\Theta_1,\Theta_2,\Theta_3,\Theta_4$ shown in Figure~\ref{fig:CyclesWithChords}. Further, $\P(G)$ is geometric-packable if $G$ is one of $C_3$, $C_4$, and $\Theta_1$.
\end{thm}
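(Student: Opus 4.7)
The theorem has two parts: the non-packability for $G$ outside the exceptional list, and the packability for $G \in \{C_3, C_4, \Theta_1\}$. The non-packability splits further depending on whether $G$ is outerplanar. If $G$ is planar Hamiltonian but \emph{not} outerplanar, the construction is immediate: every plane subgraph of a convex drawing of $K_n$ is outerplanar, so the sequence of convex drawings of $K_n$ contains no plane copies of $G$ at all, and $\P(G)$ is trivially not geometric-packable. If $G$ is outerplanar Hamiltonian but not in the exceptional list, a more delicate construction is needed, and this is where I expect most of the work on this direction. I would try drawings in which the plane copies of $G$ are constrained enough to cover only $o(n^2)$ edges: natural candidates are convex drawings analysed via a weighted counting argument (showing that certain edge classes, such as the long diagonals, receive too little coverage from plane copies of $G$), or drawings built from nearly-collinear point configurations so that most $v(G)$-subsets fail to produce a plane $G$. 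The exceptions $C_3$, $C_4$, and the small theta graphs ought to be exactly those small cases for which \emph{every} drawing contains enough plane copies.

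For the positive direction with $G = C_3$, the claim is essentially immediate: any three points in general position form a geometric triangle, so an abstract triangle near-decomposition of $K_n$ (given by a Steiner Triple System when one exists, or by the R\"odl nibble in general) automatically yields an asymptotic $\P(C_3)$-packing of every drawing of $K_n$. For $G = C_4$ and $G = \Theta_1$, the main tool is the R\"odl nibble (Pippenger--Spencer theorem) applied to the auxiliary hypergraph $\mathcal{H}$ whose vertex set is $E(K_n)$ and whose hyperedges are the plane copies of $G$ in the given drawing; a near-perfect matching in $\mathcal{H}$ is precisely an asymptotic $\P(G)$-packing of $K_n$. Pippenger--Spencer requires (i) a codegree bound, which is automatic here since $v(G)$ is constant and any two edges of $K_n$ lie in $O(n^{v(G)-4})$ plane copies of $G$, and (ii) approximate regularity: every edge of $K_n$ lies in $(1\pm o(1))D$ plane copies of $G$ with $D = \Theta(n^{v(G)-2})$. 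The geometric content is the regularity lower bound. For $G = C_4$ it rests on the fact that every four points in general position admit at least one plane $C_4$: if the convex hull has four points there is exactly one plane $4$-cycle (the hull boundary), and if it has three points then all three $4$-cycles on the four points are plane, since any two chords from the interior point share that point. A short case analysis turns this into a lower bound of $\Theta(n^2)$ plane $C_4$'s through every edge, with constant independent of the drawing. An analogous geometric lemma for $\Theta_1$ handles that case.

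The principal obstacles are twofold. First, on the negative side, for outerplanar Hamiltonian $G$ outside the list, finding a drawing that defeats every asymptotic $\P(G)$-packing requires identifying exactly which combinatorial feature of $G$ (beyond outerplanarity) is the obstruction; convex drawings alone may not suffice, and specialised constructions tailored to the structure of $G$ may be necessary. Second, on the positive side, exact regularity of $\mathcal{H}$ will not hold even for $C_4$, since edges of an arbitrary drawing lie in different numbers of plane $4$-cycles depending on the local geometry; one must either prove asymptotic regularity up to a $(1+o(1))$ factor, or first pass to a regular sub-hypergraph covering $(1-o(1))$ of the edges, before invoking Pippenger--Spencer.
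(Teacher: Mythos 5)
Your negative direction points at the right construction --- the bad sequence of drawings is the convex one --- and your observation that a non-outerplanar Hamiltonian $G$ has \emph{no} plane copies at all in a convex drawing of $K_n$ is correct and disposes of that case immediately. For outerplanar $G$ outside the exceptional list you have only named the idea (``a weighted counting argument on edge classes such as the long diagonals''), not carried it out; this is exactly what the paper does via Lemma~\ref{lem:notpackability} (an averaging argument over edge lengths in $\K_n$) and Propositions~\ref{prop:MaximalConvexSet} and~\ref{prop:Theta5}, so your sketch is consistent with the paper but incomplete. The $C_3$ case is fine as you state it.

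The genuine gap is in your positive direction for $C_4$ and $\Theta_1$. The hypergraph $\mathcal{H}$ of plane copies is not approximately regular, and the failure is by a constant factor rather than a $(1+o(1))$ factor: already in the convex drawing of $K_n$, an extremal edge lies in roughly $n^2/2$ plane $C_4$'s while a longest diagonal lies in only roughly $n^2/4$ of them, so Pippenger--Spencer does not apply, and ``first pass to a regular sub-hypergraph covering $(1-o(1))$ of the edges'' is the entire difficulty rather than a preprocessing step --- it amounts to exhibiting a near-perfect \emph{fractional} matching of $\mathcal{H}$, which is precisely the object whose possible nonexistence Lemma~\ref{lem:notpackability} exploits. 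Worse, your proposed geometric input (``every edge lies in $\Theta(n^2)$ plane copies, with constant independent of the drawing'') is demonstrably insufficient: every edge of the convex $\K_n$ lies in $\Theta(n^3)$ plane $C_5$'s, yet $\C_5$ is not convex-packable, so the identical argument would ``prove'' a false statement. The paper avoids the nibble altogether and gives explicit packings: for $C_4$, fix a hull vertex $v_1$ and the median vertex $v_{\lceil n/2\rceil}$ in the radial order around $v_1$, observe that the cycles $v_1 v_i v_{\lceil n/2\rceil} v_{\lceil n/2\rceil+i}$ are plane and pairwise edge-disjoint and cover all but $O(1)$ edges at those two vertices, then delete both vertices and recurse (losing $O(n)$ edges in total); for $\Theta_1$, use the Ham Sandwich Theorem to split the points into four quadrants of equal size, pack all cross-quadrant edges except those between one opposite pair of quadrants by explicit copies of $\Theta_1$ indexed by residues, and recurse on the three remaining complete subgraphs, giving $O(n\log n)$ uncovered edges. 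To repair your route you would need a drawing-independent fractional perfect matching of $\mathcal{H}$ (after which a fractional-to-integral nibble theorem such as Kahn's, not Pippenger--Spencer, finishes); constructing that weighting is where all the geometry lives, and nothing in your proposal supplies it.
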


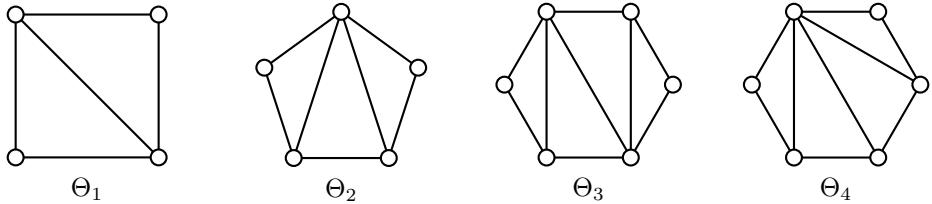
\begin{figure}[h]
    \centering
      \begin{tikzpicture}[thick, scale=.7]
    \tikzstyle{uStyle}=[shape = circle, minimum size = 6.0pt, inner sep = 0pt,
    outer sep = 0pt, draw, fill=white]
    \tikzstyle{lStyle}=[shape = rectangle, minimum size = 20.0pt, inner sep = 0pt,
outer sep = 2pt, draw=none, fill=none]
    \tikzset{every node/.style=uStyle}
    
    \begin{scope}[xshift=-3.6in, yshift=-.01in, rotate=-45, scale=1.2]
            
    \foreach \i in {1,...,4}
    \draw (90*\i:1.6cm) node (v\i) {};
    
    \foreach \i/\j in
    {1/2,2/3,4/1,2/4}
    \draw (v\i) edge (v\j);
    
    \draw (v3) edge[below] node[lStyle]{\footnotesize{$\Theta_1$}} (v4);
    \end{scope}
    
    \begin{scope}[xshift=-1.7in, yshift=-.06in, scale=.96]
        
    \foreach \i in {1,...,5}
    \draw (18+72*\i:1.6cm) node(v\i) {};
    \foreach \i/\j in
    {1/2,2/3,4/5,5/1,1/3,1/4}
    \draw (v\i) edge (v\j);
    
    \draw (v3) edge[below] node[lStyle]{\footnotesize{$\Theta_2$}} (v4);
    \end{scope}
    
    \begin{scope}[xshift=0.15in]
    \foreach \i in {1,...,6}
    \draw (60*\i:1.6cm) node (v\i) {};
    
    \foreach \i/\j in
    {4/3,3/2,2/1,1/5,6/5,1/6,4/2,2/5}
    \draw (v\i) edge (v\j);
    
    \draw (v4) edge[below] node[lStyle]{\footnotesize{$\Theta_3$}} (v5);
    \end{scope}
    
    \begin{scope}[xshift=2.0in]
    \foreach \i in {1,...,6}
    \draw (60*\i:1.6cm) node (v\i) {};
    
    \foreach \i/\j in
    {4/3,3/2,2/1,2/6,6/5,1/6,4/2,2/5}
    \draw (v\i) edge (v\j);
    
    \draw (v4) edge[below] node[lStyle]{\footnotesize{$\Theta_4$}} (v5);
    \end{scope}
    
    \end{tikzpicture}
    \caption{Four plane triangulated cycles.  The first, $\Theta_1$, is geometric-packable.  For each of the remaining three, the question of geometric-packability remains open.}
    \label{fig:CyclesWithChords}
\end{figure}

The plane geometric-packing problem is still open for 3 planar Hamiltonian graphs: $\Theta_2$, $\Theta_3$, and $\Theta_4$. In Theorem~\ref{thm:CyclesWithChords} we show that, in fact, each can be packed when the vertices of the complete geometric graphs are in strictly convex position. We conjecture the following.

\begin{conj}
$\P(G)$ is geometric-packable when $G$ is any of $\Theta_2$, $\Theta_3$, and $\Theta_4$.
\end{conj}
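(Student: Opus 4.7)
The plan is to prove the conjecture by a R\"odl--nibble / Pippenger--Spencer argument applied to an auxiliary hypergraph. Given a sequence $\{H_n\}$ of geometric drawings of $K_n$ and $G \in \{\Theta_2, \Theta_3, \Theta_4\}$, I would define the $e(G)$-uniform hypergraph $\mathcal{H}_n$ whose vertex set is $E(H_n)$ and whose hyperedges are the edge sets of plane copies of $G$ in $H_n$. An asymptotic $\P(G)$-packing of $H_n$ is exactly an asymptotically perfect matching in $\mathcal{H}_n$. Since any two edges of $K_n$ span at most $4$ vertices, the maximum codegree in $\mathcal{H}_n$ is $O(n^{v(G)-4})$, so by Pippenger--Spencer it suffices to prove that every edge of $H_n$ lies in $(1+o(1))\, c_G\, n^{v(G)-2}$ plane copies of $G$ for some constant $c_G = c_G(G) > 0$.

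The main task thus reduces to a purely geometric statement: for every edge $uv$ in any geometric $K_n$, there exist $\Omega(n^{v(G)-2})$ choices of additional points $w_1, \ldots, w_{v(G)-2}$ for which $\{u, v, w_1, \ldots, w_{v(G)-2}\}$ supports a plane copy of $G$ that includes $uv$. A natural first attempt is to invoke the Erd\H{o}s--Szekeres theorem to partition (most of) $V(H_n) \setminus \{u,v\}$ into many subsets in strictly convex position, and then within each convex subset apply Theorem~\ref{thm:CyclesWithChords} (the convex-packability result for these graphs) to produce many plane copies of $G$ using the edge $uv$. A more refined approach may exploit the observation that the triangulated structure of each $\Theta_i$ is essentially forced once a suitable plane Hamilton cycle through $uv$ has been chosen, so the essential subproblem is to count plane Hamilton cycles of the appropriate length through any fixed edge in a geometric $K_n$.

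The hard part, and the apparent reason this remains conjectural, is establishing the uniform lower bound on the number of plane $G$-copies through each edge. Unlike $C_4$ or $\Theta_1$, the graphs $\Theta_2, \Theta_3, \Theta_4$ contain multiple chords that must simultaneously avoid crossings, and in highly non-convex drawings certain edges may be ``squeezed'' between many points in a way that makes the chord constraints binding. Overcoming this obstacle will likely require a deletion step -- identifying a set of $o(n^2)$ ``bad'' edges that lie in too few plane copies of $G$, removing them, and running Pippenger--Spencer on the remainder -- together with a geometric lemma (possibly using the first selection lemma or a projective-transformation argument) guaranteeing that the bad-edge set is indeed of lower order for every geometric drawing of $K_n$. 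Getting this final ingredient uniformly across all drawings is where I expect the work to lie.
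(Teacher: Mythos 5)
You are addressing a statement that the paper itself poses as a conjecture: the paper contains no proof of it, and your writeup does not supply one either, as you concede in your final paragraph. More importantly, the reduction you do carry out is unsound. You claim that, given the codegree bound, it suffices to show every edge of $H_n$ lies in $\Omega(n^{v(G)-2})$ plane copies of $G$. A degree lower bound of the correct order together with small codegrees does \emph{not} yield an asymptotically perfect matching: Pippenger--Spencer needs near-regularity, or more generally a near-perfect fractional matching. The convex plane cycle $\C_5$ refutes your reduction outright: in the convex $\K_n$ every edge lies in $\Theta(n^{3})=\Theta(n^{v(C_5)-2})$ plane copies of $\C_5$ and all codegrees are $O(n)$, yet Lemma~\ref{lem:notpackability} (the average-length argument) shows $\C_5$ is not convex-packable --- indeed the \emph{fractional} matching number of the associated hypergraph is already bounded away from $\binom{n}{2}/5$, since plane copies of $\C_5$ have average edge length at most $n/5$ while $\K_n$ has average edge length $(1+o(1))n/4$. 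This length obstruction is invisible to degree and codegree counting, and it is precisely the phenomenon this paper is built around. Even your stronger regularity claim, that each edge lies in $(1+o(1))c_G n^{v(G)-2}$ plane copies, fails already in the convex drawing: for $\Theta_2$, an edge $uv$ is used by all $5$ plane copies of $\Theta_2$ on a given $5$-set when $uv$ is a hull edge of that $5$-set but by only $2$ of them when it is a diagonal, and the proportion of $5$-sets of each type depends on $\ell(u,v)$, so degrees differ by constant factors across edges. (A smaller slip: two adjacent edges of $K_n$ span only $3$ vertices, so the maximum codegree is $O(n^{v(G)-3})$, not $O(n^{v(G)-4})$; this is still $o(D)$ and harmless.)

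The auxiliary geometric steps also do not work as described. Erd\H{o}s--Szekeres only guarantees convex subsets of size $\Theta(\log n)$, so a partition of $V(H_n)$ into convex classes has $\Omega(n/\log n)$ classes; all but $o(n^2)$ edges then run \emph{between} classes and cannot be covered by copies of $G$ confined to a single convex class, nor does this exhibit many plane copies through a fixed edge $uv$, whose endpoints will generally land in different classes. By contrast, the paper's successful arguments for $C_4$ and $\Theta_1$ in Theorem~\ref{thm:GeometricGraphs} are explicit recursive constructions (rotating $4$-cycles through a hull vertex and a median vertex; a Ham Sandwich partition into four parts with copies of $\Theta_1$ placed across the parts), and Theorem~\ref{thm:ConvexGeometricGraphs} exhibits drawings in which convex plane copies of $\Theta_2,\Theta_3,\Theta_4$ provably cannot cover all but $o(n^2)$ edges, so any proof of the conjecture must make essential use of non-convex plane drawings of these graphs --- something your sketch never engages with. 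What you have is a correct identification of where the difficulty lies, not a proof.
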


In contrast we show that, if we restrict the geometric graphs that we pack to be both plane and also convex, then every planar Hamiltonian graph, other than the triangle, is not geometric-packable. 

\begin{thm}\label{thm:ConvexGeometricGraphs}
 If $G$ is a planar Hamiltonian graph, then $\P^*(G)$ is not geometric-packable unless $G$ is $C_3$. Further, $\P^*(C_3)$ is geometric-packable.
\end{thm}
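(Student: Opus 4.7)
Three points in general position form a triangle and hence are automatically in strictly convex position, so $\P^*(C_3)=\P(C_3)$. The latter is geometric-packable via Steiner triple systems, as recalled in the introduction.

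\textbf{Negative claim, easy half.} Let $G$ be a planar Hamiltonian graph with $G\neq C_3$. When $G\notin\{C_4,\Theta_1,\Theta_2,\Theta_3,\Theta_4\}$, Theorem~\ref{thm:GeometricGraphs} furnishes a sequence of drawings of $K_n$ that cannot be asymptotically packed by $\P(G)$. Since $\P^*(G)\subseteq\P(G)$, every $\P^*(G)$-packing is in particular a $\P(G)$-packing, so $p(H,\P^*(G))\le p(H,\P(G))$ for every geometric graph $H$, and the same sequence witnesses that $\P^*(G)$ is not geometric-packable.

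\textbf{Negative claim, hard half.} For the five remaining graphs $G\in\{C_4,\Theta_1,\Theta_2,\Theta_3,\Theta_4\}$ the easy argument fails (indeed $\P(C_4)$ and $\P(\Theta_1)$ are geometric-packable by Theorem~\ref{thm:GeometricGraphs}), and for each such $G$ one must exhibit an explicit obstructing family $\{D_n\}$ of drawings of $K_n$. The key structural observation I would exploit is that in any plane drawing of a Hamiltonian graph on $k=v(G)$ vertices in strictly convex position, the Hamilton cycle of $G$ traces the boundary of the convex $k$-gon formed by its vertices; each $\P^*(G)$-copy is thus determined by a choice of $k$ vertices in convex position together with a non-crossing placement of the chords of $G$ inside the resulting convex polygon. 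My candidate construction for $D_n$ places a linear-in-$n$ subset of vertices in a tightly controlled configuration, chosen so that most $k$-subsets containing two or more of these special vertices fail to be in strictly convex position; the $\Omega(n^2)$ edges among those special vertices should then be largely uncoverable by any $\P^*(G)$-copy, producing the required $\Omega(n^2)$-edge deficit.

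\textbf{Main obstacle.} The hard step is the quantitative analysis for each of the five graphs: verifying that the chosen drawing actually forces an $\Omega(n^2)$-edge deficit, while handling the distinct chord structures of $\Theta_1,\Theta_2,\Theta_3,\Theta_4$, which affect both which convex $k$-subsets can host a copy and how the edges of $K_n$ are shared among copies on the same vertex set. In particular, since 2-element subsets of the ``special'' cluster can still co-appear in convex $k$-subsets for certain configurations of the other $k-2$ vertices (as one sees already for $k=4$), the construction must simultaneously control all such configurations. A case-by-case analysis, with separate treatments for $v(G)=4$ (for $C_4,\Theta_1$) and $v(G)\in\{5,6\}$ (for $\Theta_2,\Theta_3,\Theta_4$), appears unavoidable.
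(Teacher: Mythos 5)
Your positive claim and your ``easy half'' are correct and essentially match the paper: $\P^*(C_3)=\P(C_3)$ is packable via Steiner triple systems, and for $G$ outside $\{C_3,C_4,\Theta_1,\Theta_2,\Theta_3,\Theta_4\}$ the non-packability of $\P^*(G)$ follows from the same convex-position obstruction (Propositions~\ref{prop:MaximalConvexSet} and~\ref{prop:Theta5}) that drives the negative part of Theorem~\ref{thm:GeometricGraphs}. The gap is the ``hard half,'' which is the actual content of the theorem: you never produce a working family $\{D_n\}$ for the five exceptional graphs, and the construction you sketch is unlikely to be realizable. You propose a linear-size cluster of ``special'' vertices arranged so that most $k$-subsets meeting the cluster in two or more points fail to be convex, hoping thereby to strand the $\Omega(n^2)$ edges inside the cluster. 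But any $m$ points in general position contain a positive proportion of their $4$-subsets in convex position (at least roughly $0.38\binom{m}{4}$, by the rectilinear-crossing-number bounds), and more generally many convex $k$-subsets; so the edges internal to your cluster can still be covered by copies of $G$ drawn entirely inside the cluster, and the claimed $\Omega(n^2)$ deficit does not follow from your mechanism.

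The paper's construction obstructs packing by a different mechanism, which you would need to complete your argument. It takes a flat convex arc of $n/3$ points together with its rotations by $120$ and $240$ degrees, giving groups $A,B,C$, arranged so that no $4$-subset with two points in one group and one point in each of two others is in convex position. Hence every convex $C_4$ has all four vertices in at most two groups and therefore uses at least two intra-group edges. The obstruction is then a supply-and-demand count: there are only about $n^2/6$ intra-group edges but about $n^2/3$ inter-group edges, so any $\P^*(C_4)$-packing has at most $(1+o(1))n^2/12$ copies, covers at most $(1+o(1))\tfrac{2}{3}\binom{n}{2}$ edges, and leaves $\Omega(n^2)$ inter-group edges uncovered; analogous counts handle $\Theta_1,\dots,\Theta_4$ on the same drawings. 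In short, the working idea is not to make a class of edges uncoverable, but to force every convex copy to consume a scarce class of edges at a rate the supply cannot sustain.
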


The negative part of Theorem~\ref{thm:GeometricGraphs} comes from the fact that, for most planar Hamiltonian graphs $G$, the set $\P(G)$ cannot be packed into the convex drawing of the complete graphs. Note that there is at most one way (in the sense of convex-isomorphism, which we define below) to draw a plane Hamiltonian graph as a plane convex geometric graph. This motivates us to investigate packing problems for convex geometric graphs.

\subsection{Convex Geometric Graphs (CGG)}
A {\em convex geometric graph} (CGG for short) $G$ is a geometric graph whose vertices
are in strictly convex position;  Figure \ref{fig:CyclesWithChords} shows four examples.
We denote the vertices of $G$ by 
$v_0,v_1,\dots,v_{n-1}$ and assume these vertices
appear in clockwise (cyclic) order on the boundary of their convex hull
(indexing is modulo $n$). Any subset of $V(G)$ that is contiguous with respect
to its cyclic order is an {\em interval}.  A pair $\{v_i,v_{i+1}\}$ of $V(G)$
is an {\em extremal pair}. For any pair $\{x, y\}$ in a CGG, the {\em length} of $\{x,y\}$, denoted by $\ell(x,y)$, is the minimum
length of a path from $x$ to $y$ using only extremal pairs. For example, in Figure~\ref{Figure:Example}, $\ell(x,y)=\ell(y,x)=2$.

Informally, two CGGs are convex-isomorphic if some graph isomorphism between them
preserves the cyclic order of all vertices.
Formally, CGGs $G_1$ and $G_2$ are {\em convex-isomorphic}
if there is a bijective function $f: V(G_1)\rightarrow V(G_2)$ such
that for any pair of vertices $x,y\in V(G_1)$: (i) $\ell(x,y)=\ell(f(x),f(y))$; (ii) $\{x,y\}\in E(G_1)$ if and only if $\{f(x),f(y)\}\in E(G_2)$.
A CGG $H$ \emph{contains a CGG $G$} if some subgraph of $H$ is
convex-isomorphic to $G$.

Given CGGs $G$ and $H$, a {\em $G$-packing} of $H$ is a collection of
edge-disjoint subgraphs of $H$ that are convex-isomorphic to $G$. Let $p(H,G)$ be the
maximum size of a $G$-packing in $H$. Let $\mathcal{K}_n$ be the complete CGG
on $n$ vertices. In particular, let $p(n,G)=p(\mathcal{K}_n,G)$. For example,
if $G$ is a $4$-cycle with a crossing, then $p(6,G)=3$.
Figure~\ref{Figure:Example} shows a construction proving the lower bound.  (The
upper bound holds since $\lfloor\binom{6}{2}\slash 4\rfloor=3$.)

\begin{figure}[!h]
    \centering
\begin{tikzpicture}[scale=.6, very thick]
\tikzstyle{uStyle}=[shape = circle, minimum size = 6.0pt, inner sep = 0pt,
outer sep = 0pt, draw, fill=white]
\tikzstyle{lStyle}=[shape = rectangle, minimum size = 20.0pt, inner sep = 0pt,
outer sep = 2pt, draw=none, fill=none]
\tikzset{every node/.style=uStyle}

\foreach \i in {0,...,5}
\draw (60*\i:1in) node (v\i) {};
\draw[red, thick] (v0) -- (v1) -- (v3) -- (v2) -- (v0);
\draw[blue, very thick] (v1) -- (v2) -- (v5) -- (v4) -- (v1);
\draw[green!50!black, ultra thick] (v3) -- (v4) -- (v0) -- (v5) -- (v3);
\draw (180:1.25in) node[lStyle]{$x$};
\draw (60:1.25in) node[lStyle]{$y$};
\end{tikzpicture}
    \caption{When $G$ is a $4$-cycle with a crossing, $p(6,G)=3$.}
    \label{Figure:Example}
\end{figure}
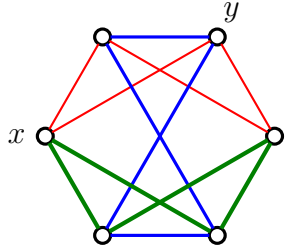

Recall the definition of asymptotically packed from~\eqref{asymptotic-packing-def}.
Further, $G$ is {\em convex-packable} if $G$ asymptotically packs into $\K_n$. That is, $G$ is convex-packable if there exist $G$-packings of
$\K_n$ that cover all but $o(n^2)$ edges.
Finally, a CGG is {\em plane} if no two of its edges cross.

Clearly, for any graph $G$, if $\P(G)$ is geometric-packable and there is only one way to draw $G$ as a plane CGG, then $G$, as a plane CGG, is convex-packable. Let $\C_k$ denote the convex plane $k$-cycle.  Note that $\C_3$ and $\C_4$ are
convex-packable by Theorem~\ref{thm:GeometricGraphs}; so we naturally ask: Is
$\C_5$ convex-packable? The answer is No. In fact, for all $k\ge 5$, the
average length of the edges in a copy of $\C_k$ in $\K_n$ is at most $n/k$;
hence the average length of all edges covered by a $\C_k$-packing of $\K_n$ is
also at most $n/k$. In contrast, the average length of all edges in $\K_n$ is
$(1+o(1))n/4$.  So when $k\ge 5$, no $\C_k$-packing can cover all but $o(n^2)$
edges of $\K_n$. 

By extending this average length argument, we find a necessary
condition (see Lemma~\ref{lem:notpackability}) for a CGG to be convex-packable.
Currently, this is our only tool to prove that a CGG is not convex-packable. But
for many CGGs, it is enough.  We use this argument to prove the
convex-nonpackability of most plane Hamiltonian CGGs.

\begin{thm}
\label{thm:CyclesWithChords}
All plane Hamiltonian CGGs are not convex-packable, except for the two plane cycles $\C_3$ and $\C_4$ and the
four CGGs $\Theta_1,\Theta_2,\Theta_3$ and $\Theta_4$ shown in
Figure~\ref{fig:CyclesWithChords}, which are all convex-packable.
\end{thm}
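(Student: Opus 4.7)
The plan is to handle the two directions separately. For the positive direction (that the six listed CGGs are convex-packable), the cases $\C_3$, $\C_4$, and $\Theta_1$ follow immediately from Theorem~\ref{thm:GeometricGraphs}: the underlying abstract graphs each admit a unique plane drawing on convex vertices (up to convex-isomorphism), so the geometric packings of the convex drawing $\K_n$ of $K_n$ guaranteed by that theorem are automatically convex packings of $\K_n$ by the corresponding plane CGG. The remaining three cases $\Theta_2$, $\Theta_3$, $\Theta_4$ require an explicit construction; my plan is to exploit the cyclic rotation action of $\Z_n$ on $V(\K_n)$. I would produce a family of $m \sim n/(2 e(\Theta_i))$ ``base'' copies $\Theta^{(1)}, \ldots, \Theta^{(m)}$ of $\Theta_i$ in $\K_n$, each having pairwise distinct edge lengths, so that each $\Z_n$-orbit is edge-disjoint and the orbits collectively cover all but $o(n^2)$ edges of $\K_n$. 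An embedding of $\Theta_i$ in $\K_n$ is specified by a gap sequence $(g_1, \ldots, g_k)$ summing to $n$, with boundary edge lengths given by the $g_i$ and each chord length a determined function of the gaps; the key is that these $k$ free parameters per base copy provide enough flexibility to realize prescribed edge lengths whose union across the $m$ base copies essentially equals $\{1, 2, \ldots, \lfloor n/2 \rfloor\}$.

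For the negative direction, I would invoke Lemma~\ref{lem:notpackability}. Any embedding of a plane Hamiltonian CGG $G$ on $k$ vertices with $c$ chords into $\K_n$ has boundary edges of total length at most $n$ and each of the $c$ chords of length at most $n/2$, so the average edge length in a copy of $G$ is at most $(n + cn/2)/(k+c)$. This naive bound already dispatches the easy cases: $\C_k$ for $k \ge 5$ gives average $n/k < n/4$, the hexagon triangulation whose three diagonals form an inner triangle gives average at most $2n/9 < n/4$, and several other sparsely chorded polygons fall below $n/4$ for the same reason. For denser chord configurations, I would refine this bound using the non-crossing structure of the chord diagram: a long chord (length near $n/2$) forces a balanced partition of the arcs on either side, and non-crossing chords cannot simultaneously impose conflicting such partitions, so the total chord length is forced strictly below $cn/2$.

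The main obstacle is the borderline configurations in which the naive length-averaging bound ties at exactly $n/4$. The most striking is the $5$-cycle with a single chord, whose six edges can be embedded in $\K_n$ with total length $3n/2$ and thus average exactly $n/4$; certain $6$-cycles with two chords and various sparsely chorded polygons on $k \ge 7$ vertices also sit at this threshold. For these, Lemma~\ref{lem:notpackability} must be applied in a sharpened form --- for instance by weighting each edge by a strictly convex function of its length, or by tracking the distribution of lengths rather than only the mean --- in order to conclude non-packability. Establishing such a sharpened argument uniformly for every non-exceptional plane Hamiltonian CGG, including the less-balanced triangulations of $k$-gons for $k \ge 7$, is where the bulk of the work in the negative direction lies.
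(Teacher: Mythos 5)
Your negative direction has a genuine gap, and the gap is located exactly where you placed it: the ``borderline'' cases. You compare the mean edge length of a copy of $G$ against the mean edge length $n/4$ of $\K_n$, observe that this ties for graphs like the $5$-cycle with one chord, and then defer to an unspecified ``sharpened form'' of Lemma~\ref{lem:notpackability} (convex weighting of lengths, tracking the distribution, etc.). That sharpening is never carried out, so the proof is incomplete for those cases and for the ``less-balanced triangulations of $k$-gons for $k\ge 7$'' that you flag. What you have missed is that Lemma~\ref{lem:notpackability} \emph{as stated} already does the job, because condition (i) is strictly stronger than a comparison of means: applied to a subset $S$ of the edges of $G$, it says the images of $S$ cover a $|S|/e(G)$ fraction of $E(\K_n)$, and that many edges of $\K_n$ must have average length at least $(1-o(1))|S|n/(4e(G))$ --- not $n/4$. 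Taking $S=A$ to be the $k$ Hamiltonian-cycle edges (a convex set, so $\sum_{e\in A}f(e)\le 1$) gives $1\ge k^2/(4e(G))$, i.e.\ $e(G)\ge k^2/4$. For the $5$-cycle with one chord this reads $6\ge 25/4$, a contradiction with no tie to break; for $k\ge 7$ it reads $e(G)\ge k^2/4>2k-3$, contradicting planarity outright, so no refined analysis of chord diagrams is needed. The only survivor besides the six exceptional graphs is the triangulated hexagon $\Theta_5$ whose three chords form an inner triangle, and it falls to the same lemma by summing over the two convex sets $A_1$ (the six boundary edges) and $A_2$ (the three chords): $2\ge\sum_{e\in A_1\cup A_2}f(e)\ge 81/36=9/4$. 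Your $2n/9$ computation for $\Theta_5$ is the right instinct, but without the $|S|^2/(4e(G))$ lower bound it again only ties against naive averaging in nearby cases.

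The positive direction is the right outline but is not yet a proof for $\Theta_2,\Theta_3,\Theta_4$. Deducing $\C_3$, $\C_4$, $\Theta_1$ from Theorem~\ref{thm:GeometricGraphs} via uniqueness of the plane convex drawing is exactly what the paper does. For the other three, your rotation-orbit scheme (base copies with pairwise distinct edge lengths whose length-sets partition most of $\{1,\dots,\lfloor n/2\rfloor\}$) is precisely the paper's notion of strong packability, but the assertion that the $k$ gap parameters ``provide enough flexibility'' is where all the work lives: the boundary lengths must sum to $n$, each chord length is forced by the gaps, and one must exhibit roughly $n/(2e(G))$ such tuples with pairwise disjoint length-sets. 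The paper does this with explicit length-vectors whose seven (resp.\ nine) coordinates occupy distinct residue classes modulo $e(G)$, which is what makes disjointness checkable; without such a construction, or at least a counting argument that one exists, this half of the theorem remains unproven as well.
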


There is significant research~\cite{AHKVLPSW2017,BK1979,BHRW2006,TCAK2019,biniaz2020packing,obenaus2021edge} about packing large plane trees into geometric graphs. An important conjecture asks whether it is possible to pack $\lfloor \frac{n}{2} \rfloor$ plane spanning trees into a geometric complete graph. This is possible if the drawing of the complete graph is convex~\cite{BK1979,BHRW2006}, but it was very recently shown that the general conjecture is false~\cite{obenaus2021edge}. The current best general construction shows that it is possible to pack $\lfloor \frac{n}{3} \rfloor$ plane spanning trees into any geometric drawing of the complete graph~\cite{biniaz2020packing}. 
In this paper, we relax the requirement that the trees be large, and we focus on packing problems for convex-isomorphic trees in convex geometric complete graphs.  

A plane CGG is a \emph{plane path} if its underlying abstract graph is a path. 
A \emph{convex path} is a plane path in which all edges are extremal.
A \emph{plane convex caterpillar} is a plane CGG such that deleting all leaves
yields a convex path; this resulting path is the caterpillar's \emph{spine}. 

Informally, a reflection of a CGG along an edge ``flips'' all edges and vertices
on one side of that edge.  It is easy to see that each plane path can be
transformed into a plane convex caterpillar by reflections
(Figure~\ref{fig:PathToCaterpillar} shows an example).
Thus, packing problems for plane paths can be reduced
to packing problems for plane convex caterpillars. 

By Lemma~\ref{lem:notpackability}, which formalizes our argument above about
average edge length, every convex path with at least 5 edges is not convex-packable. 
Moreover, Lemma~\ref{lem:notpackability} implies that if a plane path $P$ with
$k$ edges has more than $2\sqrt{k}$ extremal edges, then $P$ is not
convex-packable neither.  And in Section~\ref{sec:Paths}, we prove two partial converses.

The rest of this paper is structured as follows. 
In Section~\ref{sec:notpackable}, we prove a necessary condition for a CGG to be
convex-packable and use it to prove that many CGGs are not convex-packable, which also implies that their corresponding plane geometric graphs are not geometric-packable.
In Section~\ref{sec:geometric}, we prove
Theorems~\ref{thm:GeometricGraphs} and~\ref{thm:ConvexGeometricGraphs}.
In Section~\ref{sec:StronglyPackable},
we define the notion of ``strongly packable'', which strengthens
the property of being convex-packable.
We use this idea to prove Theorem~\ref{thm:CyclesWithChords}, as well as our
results on convex-packable plane paths. 
In Section~\ref{sec:conclude}, we outline directions for future research.

\section{Convex-Nonpackable CGGs}
\label{sec:notpackable}
Sections~\ref{sec:notpackable} and~\ref{sec:StronglyPackable} are mainly focused
on characterizing exactly which CGGs are convex-packable, among all plane
Hamiltonian CGGs.  Most of
these graphs are indeed not convex-packable, as stated in
Theorem~\ref{thm:CyclesWithChords}.  To show this, we develop a general
counting argument to prove convex-nonpackability.  That is the goal of
Section~\ref{sec:notpackability}.
In Section~\ref{sec:notpackability-apps}, we apply this argument to prove the
negative direction of Theorem~\ref{thm:CyclesWithChords}.  (And in
Section~\ref{sec:CyclesWithChords} we prove the positive direction.)

\subsection{A Necessary Condition for Convex-Packability}
\label{sec:notpackability}
Let $G$ be a CGG. A set of edges $\{e_1,e_2,\dots,e_k\}$ of $G$ is {\em convex} if for
each edge
$e_i$, all other edges $e_j$ lie on the same side of the line determined by $e_i$. For example, in Figure~\ref{fig:ConvexSet}, the set of blue edges is convex while the set of red edges is not.

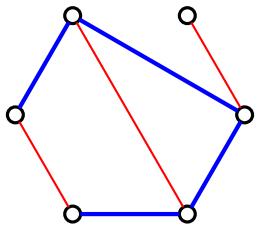
\begin{figure}[h]
    \centering
    \begin{tikzpicture}[scale=.6, very thick]
    \tikzstyle{uStyle}=[shape = circle, minimum size = 6.0pt, inner sep = 0pt,
    outer sep = 0pt, draw, fill=white]
    \tikzstyle{lStyle}=[shape = rectangle, minimum size = 20.0pt, inner sep = 0pt,
    outer sep = 2pt, draw=none, fill=none]
    \tikzset{every node/.style=uStyle}
    
    \foreach \i in {0,...,5}
    \draw (60*\i:1in) node (v\i) {};
    \draw[red, thick] (v0) -- (v1)  (v2) -- (v5)  (v3) -- (v4);
    \draw[blue, ultra thick] (v4) -- (v5) -- (v0) -- (v2) -- (v3);
    
    \end{tikzpicture}
    \caption{The set of thick blue edges is convex, but the set of thin red edges is not.}
    \label{fig:ConvexSet}
\end{figure}

The following lemma is our key tool for proving convex-nonpackability.

\begin{lem}\label{lem:notpackability}
Let $G$ be a CGG.  If $G$ is convex-packable, then there exists $f:E(G)\to \R^+$ that
satisfies the following two conditions: (i) Any set of edges $S$ satisfies
$\sum_{e\in S}f(e)\ge |S|^2/(4e(G))$ and (ii) any convex set of edges $A$
satisfies $\sum_{e\in A}f(e)\le 1$.
\end{lem}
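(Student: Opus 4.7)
The plan is to define $f$ as a pointwise limit of ``normalized average edge-length'' functions extracted from a sequence of near-perfect packings. By convex-packability, for each $\epsilon>0$ one can find an $n$ and a $G$-packing $\mathcal{C}_n$ of $\K_n$ of size $N:=|\mathcal{C}_n|\ge (1-\epsilon)\binom{n}{2}/e(G)$. For each copy $C\in \mathcal{C}_n$ and each edge $e\in E(G)$, let $\ell_C(e)\in\{1,\dots,\lfloor n/2\rfloor\}$ denote the length, \emph{measured in $\K_n$}, of the edge of $\K_n$ assigned to $e$ by the convex-isomorphism realizing $C$, and set
\[f^{(n)}(e):=\frac{1}{nN}\sum_{C\in \mathcal{C}_n}\ell_C(e)\in[0,\tfrac{1}{2}].\]

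For condition (ii), a convex-isomorphism preserves cyclic order (possibly up to reflection) and therefore sends a convex edge set $A\subseteq E(G)$ to a convex edge set in $\K_n$. The endpoints of any convex edge set in $\K_n$ form a convex sub-polygon whose sides have arc lengths that sum to $n$; since each side's length in $\K_n$ is at most its arc length and $A$'s image is a subset of these sides, one gets $\sum_{e\in A}\ell_C(e)\le n$ for every copy $C$. Averaging over $C$ yields $\sum_{e\in A}f^{(n)}(e)\le 1$.

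For condition (i), fix $S\subseteq E(G)$ with $|S|=k$ and let
\[q_\ell:=\frac{1}{N}\sum_{C\in \mathcal{C}_n}|\{e\in S:\ell_C(e)=\ell\}|.\]
Then $\sum_\ell q_\ell=k$ (each copy contributes $k$ in total to the count) and $q_\ell\le n/N$ (copies are edge-disjoint while $\K_n$ has at most $n$ edges of any fixed length). Since $\sum_{e\in S}f^{(n)}(e)=\tfrac{1}{n}\sum_\ell \ell q_\ell$, minimizing $\sum_\ell \ell q_\ell$ under these constraints by placing mass on the smallest possible lengths gives $\sum_\ell \ell q_\ell\ge (1-o(1))k^2 N/(2n)$. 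Combined with $N\ge (1-\epsilon)\binom{n}{2}/e(G)$, this yields $\sum_{e\in S}f^{(n)}(e)\ge (1-o(1))k^2/(4e(G))$.

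To finish, choose $\epsilon_i\to 0$ with corresponding $n_i\to\infty$. The functions $f^{(n_i)}$ lie in the compact cube $[0,\tfrac{1}{2}]^{E(G)}$, so a pointwise convergent subsequence exists; its limit $f$ satisfies (ii) directly and (i) from the asymptotic bound. Applying (i) to $S=\{e\}$ forces $f(e)\ge 1/(4e(G))>0$, so $f$ indeed maps into $\R^+$. The main obstacle is the LP-minimization in the condition (i) argument: one must handle the integer rounding at the boundary carefully enough so that the constant $1/(4e(G))$ (rather than some loose multiple) emerges cleanly in the limit. A secondary technicality is justifying that convex-isomorphisms preserve convex edge sets and that the polygon arc-length identity extends to arbitrary subsets of polygon sides; both reduce to routine combinatorics of cyclic orderings.
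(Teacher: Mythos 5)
Your proposal is correct and follows essentially the same route as the paper: the same normalized average-edge-length functions, the same perimeter bound $\sum_{e\in A}\ell_C(e)\le n$ for condition (ii), the same ``shortest possible edges'' lower bound for condition (i) (your LP with the constraint $q_\ell\le n/N$ is just a more explicit phrasing of the paper's rearrangement argument), and the same compactness extraction of a pointwise limit. The only addition is your observation that applying (i) to singletons guarantees $f(e)>0$, which the paper leaves implicit.
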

\begin{proof}
For each positive integer $n$, let $\F_n$ be a $G$-packing of $\K_n$ that
covers all but $o(n^2)$ edges. This means that $|\F_n|=(1-o(1))\binom{n}{2}/e(G)$. 
For every $e\in G$ and every copy $G'$ of $G$ in $\K_n$, let $l_{G'}(e)$ denote
the length in $\K_n$ of the edge $F_{G,G'}(e)$.
For any edge $e\in G$, let $f_n(e)$ be the average length of the edges in
$\F_n$ corresponding to edge $e$, divided by $n$; that is,
$$
f_n(e):=\frac{\sum_{G'\in\F_n}l_{G'}(e)}{n|\F_n|}.
$$
Let $S$ be a set of edges of $G$, and denote by $\F_n(S)$ 
the set of edges
in $\K_n$ that are covered in $\F_n$ by edges of $S$.
Note that $|\F_n(S)|=|S||\F_n| =(1-o(1))\frac{|S|}{e(G)}\binom{n}{2}$. 
Further, the average length of edges in $\F_n(S)$ is at least
$$
(1-o(1))\frac{|S|}{4e(G)}n,
$$
since that is precisely the average when $\F_n(S)$ consists of the
$|\F_n(S)|$ shortest edges in $\K_n$, those with length at most
$(1-o(1))\frac{|S|}{e(G)}\frac{n}2$.
Thus, we have
$$
\frac{\sum_{e\in S}f_n(e)}{|S|}\ge(1-o(1))\frac{|S|}{4e(G)},
$$
which we rewrite as
$$
\sum_{e\in S}f_n(e)\ge(1-o(1))\frac{|S|^2}{4e(G)}.
$$
Let $A$ be a convex set of edges of $G$. 
For each $G'\in\F_n$, by definition $\sum_{e\in A}l_{G'}(e)\le n$.
Now summing over all $G'\in \F_n$ (by the definition of $f_n(e)$ above) gives
$$
\sum_{e\in A}f_n(e)=\frac{\sum_{G'\in\F_n}\sum_{e\in A}l_{G'}(e))}{n|\F_n|}\le 1.
$$
Since $\{f_n\}_{n\ge 1}$ is a sequence of bounded functions with finite domain, there 
exists a subsequence $\{f_{n_i}\}_{i\ge1}$ such that $f_{n_i}(e)$ converge for
all $e\in G$. Let $f$ be the function on $E(G)$ such that
$f(e)=\lim_{i\rightarrow\infty}f_{n_i}(e)$. Now $f$ satisfies
(i) and (ii) from the statement of the theorem.
\end{proof}
We can argue that a CGG $G$ is not convex-packable by showing that the function $f$ that would be guaranteed by Lemma~\ref{lem:notpackability} cannot exist. Using an edge length argument similar to the proof of condition (ii), we can further show that a CGG $G$ has no perfect convex-packing\footnote{By  R\"odl Nibble, if a graph has a perfect convex packing into some $\K_n$, then it is convex-packable.}, i.e. $p(n,G)<e(K_n)/e(G)$. Suppose $G$ is the plane $C_4$. The average edge length in every $\K_n$ is strictly greater than $n/4$, but the average edge length in every $C_4$-packing is at most $n/4$.  Therefore, no plane $C_4$-packing covers all edges of $\K_n$. 
\subsection{Applications of Lemma~\ref{lem:notpackability}}
\label{sec:notpackability-apps}
\begin{prop}\label{prop:MaximalConvexSet}
Let $G$ be a CGG and let $k$ be the maximum size of a convex set of edges of
$G$. If $e(G)<k^2/4$, then $G$ is not convex-packable.
\end{prop}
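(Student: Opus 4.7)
The plan is to proceed by contraposition via Lemma~\ref{lem:notpackability}: I will assume $G$ is convex-packable and deduce $e(G) \ge k^2/4$. Since the lemma already hands us a function $f:E(G)\to\R^+$ obeying the two inequalities (i) and (ii), the entire task reduces to plugging in a single cleverly chosen set of edges and reading off a numerical contradiction.

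The key step is to take $A$ to be a convex set of edges of $G$ of maximum size, so $|A|=k$. Then condition (ii) of Lemma~\ref{lem:notpackability}, applied to $A$, gives
\[
\sum_{e\in A} f(e) \le 1.
\]
On the other hand, condition (i) applied with $S:=A$ (noting that (i) holds for \emph{any} set of edges, convex or not) yields
\[
\sum_{e\in A} f(e) \ge \frac{|A|^2}{4e(G)} = \frac{k^2}{4e(G)}.
\]
Chaining these two inequalities gives $k^2/(4e(G)) \le 1$, i.e., $e(G) \ge k^2/4$. Taking the contrapositive delivers the proposition.

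There is essentially no technical obstacle here, since all of the analytic and probabilistic work (the R\"odl Nibble style averaging, the extraction of a convergent subsequence, and the edge-length bookkeeping) has been absorbed into Lemma~\ref{lem:notpackability}. The only conceptual point to flag is that the same set $A$ plays two roles simultaneously: it serves as a generic edge subset in (i) (where the lower bound grows like $|S|^2$), and it serves as a \emph{convex} edge subset in (ii) (where the upper bound is just $1$). It is exactly this $|S|^2$ versus $1$ mismatch that forces $e(G)$ to be at least $k^2/4$, so the proof is really just a matching lower and upper bound on $\sum_{e\in A}f(e)$ for one well-chosen $A$.
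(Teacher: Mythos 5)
Your proposal is correct and is essentially identical to the paper's own proof: both argue by contrapositive, take a maximum-size convex set $A$ with $|A|=k$, and chain condition (i) (applied to $A$ as an arbitrary set) with condition (ii) (applied to $A$ as a convex set) from Lemma~\ref{lem:notpackability} to get $k^2/(4e(G))\le \sum_{e\in A}f(e)\le 1$. No differences worth noting.
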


\begin{proof}

We prove the contrapositive.
Assume that $G$ is convex-packable and let $f$ be the function guaranteed
by Lemma~\ref{lem:notpackability}.  Let $A$ be a convex set of edges of size
$k$.  Combining the two conditions in Lemma~\ref{lem:notpackability} gives

$$
1\ge \sum_{e\in A}f(e)\ge \frac{k^2}{4e(G)}.
$$
Simplifying gives $e(G)\ge k^2/4$, as desired.
\end{proof}

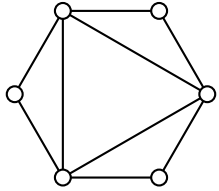
\begin{figure}[!h]
    \centering
\begin{tikzpicture}[thick, scale=.8]
\tikzstyle{uStyle}=[shape = circle, minimum size = 6.0pt, inner sep = 0pt,
outer sep = 0pt, draw, fill=white]
\tikzstyle{lStyle}=[shape = rectangle, minimum size = 6.0pt, inner sep = 0pt,
outer sep = 2pt, draw=none, fill=none]
\tikzset{every node/.style=uStyle}

\foreach \i in {1,...,6}
\draw (60*\i:1.6cm) node (v\i) {};

\foreach \i/\j in
{1/2,2/3,3/4,4/5,5/6,6/1,2/6,2/4,4/6}
\draw (v\i) edge (v\j);

\end{tikzpicture}

    \caption{$\Theta_5$ is not convex-packable.}
    \label{fig:6CycleWith3Chords}
\end{figure}

\begin{prop}\label{prop:Theta5}
If $G$ is a plane Hamiltonian CGG, then $G$
is not convex-packable unless $G$ is one of $\C_3$, $\C_4$, $\Theta_1$,
$\Theta_2$, $\Theta_3$, or $\Theta_4$ (the latter four of these are shown in
Figure~\ref{fig:CyclesWithChords}).
In particular, $\Theta_5$ is not convex-packable.
\end{prop}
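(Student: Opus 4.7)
The plan is to treat Proposition~\ref{prop:Theta5} in two stages. Let $G$ be a plane Hamiltonian CGG on $n$ vertices. Two structural facts are fundamental. First, $G$ is outerplanar, so $e(G)\le 2n-3$. Second, because $G$ is plane and Hamiltonian on vertices in strictly convex position, the Hamilton cycle of $G$ must coincide with the convex-hull boundary; this boundary is itself a convex edge-set of size $n$, so the maximum size $k$ of a convex edge-set in $G$ satisfies $k\ge n$.

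In the first stage I would apply Proposition~\ref{prop:MaximalConvexSet} as widely as possible. When $n\ge 7$, the inequality $2n-3<n^2/4\le k^2/4$ holds, so Proposition~\ref{prop:MaximalConvexSet} immediately gives that $G$ is not convex-packable. For $n\in\{3,4,5,6\}$ I would enumerate: if $G$ is not a triangulation then $e(G)\le 2n-4<n^2/4$ for all $n\ge 5$, and Proposition~\ref{prop:MaximalConvexSet} applies; the remaining non-triangulation cases at $n\le 4$ are $\C_3$ and $\C_4$. The triangulations up to convex-isomorphism are $\Theta_1$ on four vertices, the fan $\Theta_2$ on five vertices, and the three hexagon triangulations: the fan $\Theta_4$, the zigzag $\Theta_3$, and the central-triangle $\Theta_5$. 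Every one of these except $\Theta_5$ appears in the exception list of the proposition, so only $\Theta_5$ requires a separate argument.

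For $\Theta_5$ I would directly rule out the existence of a function $f$ as in Lemma~\ref{lem:notpackability}. The group of convex-isomorphisms of $\Theta_5$ to itself is dihedral of order $6$, generated by the rotation $v_i\mapsto v_{i+2}$ and the reflection through the axis $v_1 v_4$; it acts transitively on the six cycle edges and transitively on the three chord edges. Averaging a hypothetical $f$ over this group, I may assume $f\equiv a$ on the cycle edges and $f\equiv b$ on the chord edges. Two convex sets provide upper bounds: the whole $6$-cycle gives $6a\le 1$, and the central triangle $v_2 v_4 v_6$ gives $3b\le 1$. Condition (i) of Lemma~\ref{lem:notpackability} applied to the set of six cycle edges yields $6a\ge 36/(4\cdot 9)=1$, forcing $a=1/6$. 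Applying condition (i) to $S=E(\Theta_5)$ gives $6a+3b\ge 81/36=9/4$, and substituting $a=1/6$ yields $b\ge 5/12$. Since $5/12>1/3$, this contradicts $3b\le 1$, so no such $f$ exists.

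The main obstacle is confirming the case analysis for $n\in\{3,4,5,6\}$: one must check that the hexagon triangulations split into exactly the three convex-isomorphism classes $\Theta_3,\Theta_4,\Theta_5$, and that the pentagon has a unique triangulation up to convex-isomorphism. This is elementary but worth stating carefully. Once the small cases are sorted, the LP-style contradiction for $\Theta_5$ is the heart of the argument and reduces, via the dihedral symmetry, to a two-variable computation that is essentially visible by inspection.
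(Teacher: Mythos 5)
Your proposal is correct and follows essentially the same route as the paper: Proposition~\ref{prop:MaximalConvexSet} applied with the Hamilton cycle as a convex set of size $n$ eliminates everything except $\C_3$, $\C_4$, the polygon triangulations on at most $6$ vertices (which are exactly $\Theta_1,\dots,\Theta_5$ up to convex-isomorphism), and then $\Theta_5$ is killed by playing condition~(i) of Lemma~\ref{lem:notpackability} on all $9$ edges against condition~(ii) on the two convex sets formed by the outer $6$-cycle and the central triangle of chords. Your dihedral averaging step is valid (the constraints are linear and preserved by convex-automorphisms) but unnecessary: the paper gets the same contradiction $2\ge\sum_{e}f(e)\ge 9/4$ directly without symmetrizing.
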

\begin{proof}
Let $G$ be a plane CGG formed 
by adding $t$ chords to $\C_k$.  We suppose that $G$ is convex-packable, and apply
Proposition~\ref{prop:MaximalConvexSet}.  When $k=5$, we have
$e(G)\ge\lceil25/4\rceil=7$; when $k=6$, we have $e(G)\ge9$; and when $k\ge
7$, we have $e(G)\ge k^2/4>2k-3$, which contradicts the planarity of $G$.
So the only remaining candidates for a plane convex-packable Hamiltonian CGG are the plane cycles $\C_3$, $\C_4$, the four CGGs in Figure~\ref{fig:CyclesWithChords} and the CGG $\Theta_5$
in Figure~\ref{fig:6CycleWith3Chords}.

Suppose, for contradiction, that $\Theta_5$ is convex-packable
and let $f$ be the function guaranteed by 
Lemma~\ref{lem:notpackability}.
Let $A_1$ be
the convex set consisting of the $6$ edges on the outer 6-cycle and let
$A_2$ be the convex set consisting of the $3$ chords. 
The two conditions in Lemma~\ref{lem:notpackability} give
$$
2 \ge 
\sum_{e\in A_1}f(e)+\sum_{e\in A_2}f(e)=
\sum_{e\in A_1\cup A_2}f(e)\ge \frac{9^2}{4\cdot9}=9/4,
$$
which is a contradiction.
\end{proof}

\section{Proofs of Theorem~\ref{thm:GeometricGraphs} and~\ref{thm:ConvexGeometricGraphs}}
\label{sec:geometric}

\setcounter{thmB}{\getrefnumber{thm:GeometricGraphs}}
\addtocounter{thmB}{-1}

\begin{thmB}
If $G$ is a planar Hamiltonian graph, then $\P(G)$ is not geometric-packable unless $G$ is $C_3,C_4,\Theta_1,\Theta_2,\Theta_3$, or $\Theta_4$ (shown in Figure~\ref{fig:CyclesWithChords}). Further, $\P(G)$ is geometric-packable if $G$ is $C_3$, $C_4$, or $\Theta_1$.
\end{thmB}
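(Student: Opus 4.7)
The theorem has a negative direction (for planar Hamiltonian $G$ outside the six-graph list $\{C_3, C_4, \Theta_1, \Theta_2, \Theta_3, \Theta_4\}$, $\P(G)$ is not geometric-packable) and a positive direction ($\P(G)$ is geometric-packable for $G \in \{C_3, C_4, \Theta_1\}$).

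For the negative direction I would reduce directly to Proposition~\ref{prop:Theta5}. Given a planar Hamiltonian graph $G$, place its vertices in convex position so that a Hamiltonian cycle of $G$ traces the boundary and draw the remaining edges as straight chords; call the result $G^{\ast}$. As the paper notes just before Section~1.2, $G^{\ast}$ is the \emph{unique} plane CGG drawing of $G$ up to convex-isomorphism. To defeat geometric-packability, test against the sequence $H_n = \K_n$ of complete convex geometric graphs: every subgraph of $\K_n$ has its vertices in convex position, so any plane subgraph of $\K_n$ isomorphic to $G$ is convex-isomorphic to $G^{\ast}$, and $\P(G)$-packings of $\K_n$ coincide with $G^{\ast}$-packings of $\K_n$ as CGGs. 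When $G$ lies outside the six-graph list, Proposition~\ref{prop:Theta5} says $G^{\ast}$ is not convex-packable, so $\P(G)$ fails to asymptotically pack $\{\K_n\}$ and hence is not geometric-packable.

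For the positive direction with $G = C_3$, the observation is that any three points in general position form a plane triangle, so for every geometric drawing of $K_n$ the $\P(C_3)$-packings are simply the abstract $C_3$-packings of $K_n$. Steiner triple systems furnish perfect packings when $n \equiv 1, 3 \pmod 6$, and a routine interpolation handles the remaining residues, giving the asymptotic packing uniformly over all drawings.

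For $G = C_4$ and $G = \Theta_1$, my plan is to apply the Pippenger--Spencer generalization of the R\"odl nibble to the hypergraph $\mathcal{H}_G$ on vertex set $E(K_n)$ whose hyperedges are the plane copies of $G$ in the given drawing. The codegree hypothesis is easy: two edges of $K_n$ already determine at least three of the four (resp.\ five) vertices of any plane $G$-copy through both, so their joint degree in $\mathcal{H}_G$ is $O(n^{|V(G)|-4})$, a lower-order term compared with the typical per-edge degree, which is $\Theta(n^{|V(G)|-2})$ on average. The main obstacle is the near-regularity condition: per-edge degrees in $\mathcal{H}_G$ can genuinely fluctuate with the geometry (depending, for instance, on how many of the 4-tuples through a given edge sit in ``triangle-with-interior'' position versus convex position), so they are not automatically $(1\pm o(1))$-regular. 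I would address this either by passing to a carefully chosen balanced sub-hypergraph of plane copies, or by running a weighted nibble that tolerates bounded-ratio irregularity. Either route rests on a geometric lemma bounding the per-edge degree from below by a universal constant times the average, and that is where I expect the bulk of the work to lie.
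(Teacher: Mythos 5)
Your negative direction and your treatment of $C_3$ are essentially the paper's own argument: test against the convex drawings $\K_n$, observe that a plane Hamiltonian subgraph of a convex point set must trace its Hamiltonian cycle along the convex hull and hence is convex-isomorphic to the unique plane CGG drawing of $G$, and invoke Proposition~\ref{prop:Theta5}; Steiner Triple Systems handle $C_3$. That part is fine.

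The gap is in the positive direction for $C_4$ and $\Theta_1$, which is exactly the part the theorem is really about. The Pippenger--Spencer nibble yields an asymptotically perfect matching only under near-regularity, i.e.\ almost all vertices of $\mathcal{H}_G$ (edges of $K_n$) must have degree $(1\pm o(1))D$ for a common $D$, or, in Kahn's fractional strengthening, $\mathcal{H}_G$ must admit a fractional perfect matching with small pair-weights. The degrees of $\mathcal{H}_{C_4}$ genuinely fluctuate by constant factors with the geometry (e.g.\ in a convex drawing, an edge $uv$ of length close to $n/2$ lies in roughly half as many plane $C_4$'s as a hull edge, since a plane $C_4$ through $uv$ needs the other two points on one side of the line $uv$), so near-regularity simply fails. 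Your proposed fallback --- a ``geometric lemma bounding the per-edge degree from below by a universal constant times the average'' plus a nibble tolerating bounded-ratio irregularity --- would not suffice: a constant-factor degree lower bound only yields a packing covering a constant fraction of the edges, not $1-o(1)$ of them, and it does not by itself produce a fractional perfect matching. So the step you defer is not routine bookkeeping; it is the entire content of the claim, and you have not supplied it. The paper avoids the nibble altogether and gives explicit constructions: for $C_4$, pick a hull vertex $v_1$, order the remaining vertices radially around it, pack the plane cycles $v_1,v_i,v_{\lceil n/2\rceil},v_{\lceil n/2\rceil+i}$, delete $v_1$ and $v_{\lceil n/2\rceil}$, and recurse, losing only $O(n)$ edges; for $\Theta_1$, use the Ham Sandwich Theorem to split the point set into four quarters $P_1,\dots,P_4$, pack all ``crossing'' edges (except those between $P_2$ and $P_4$) by explicit plane copies of $\Theta_1$ with one vertex in each part, and recurse on the three leftover complete subdrawings, giving $O(n\log n)$ uncovered edges. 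If you want to salvage the nibble route you would need to construct a fractional perfect matching of plane $G$-copies in an arbitrary drawing, which is not obviously easier than the explicit packings above.
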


\begin{proof}
Note that there is only one way to draw a Hamiltonian graph as a plane CGG. Therefore,
Propositions~\ref{prop:MaximalConvexSet} and~\ref{prop:Theta5} 
imply the geometric-nonpackability of $\P(G)$ for all Hamiltonian graph $G$ unless $G$ is $C_3$, $C_4$, $\Theta_1$, $\Theta_2$,
$\Theta_3$ or $\Theta_4$.
Recall that $\P(C_3)$ is geometric-packable due to the existence of Steiner Triple System. What remains to be done is to prove that $\P(C_4)$ and $\P(\Theta_1)$ are geometric-packable.

We begin with the set of plane $4$-cycles, $\P(C_4)$.
Let $D_n$ be a geometric drawing in the plane of the complete graph $K_n$.
By symmetry, assume that $v_1$ is on the convex hull of the vertices $v_1,\dots,v_n$. 
Assume further that $v_2, \dots, v_n$ appear in clockwise order around $v_1$, with
edges $v_1v_2$ and $v_1v_n$ on the convex hull. The cycles
$v_1,v_i,v_{\lceil n/2 \rceil},v_{\lceil n/2 \rceil+i}$, for each
$i\in\{2,\ldots,\lceil n/2
\rceil-1\}$, are plane and disjoint; so we add them all to our $\P(C_4)$-packing.   Further,
these cycles cover all but $O(1)$ edges incident to $v_1$ or $v_{\lceil
n/2\rceil}$.
Now we delete
$v_1$ and $v_{\lceil n/2 \rceil}$ from the drawing $D_n$ to get a drawing
$D_{n-2}$ of $K_{n-2}$.  We continue recursively, ending when the drawing has
at most 3 vertices.  Each time we delete vertices, we discard $O(1)$ uncovered
incident edges.  Thus, the resulting packing covers all but $O(n)$ edges of
$D_n$.

Now we consider the set of plane $4$-cycles with a chord, $\P(\Theta_1)$.
Let $D_n$ be an arbitrary geometric drawing in the plane of $K_n$.  
Let $f(n)=2n\log_2 n$. We prove by induction on $n$ that there exists a
$\Theta_1$-packing of $D_n$ that covers all but at most $f(n)$ edges. 

Let $m=\lfloor n/4 \rfloor$. By the Ham Sandwich Theorem, there exist two straight
lines partitioning the plane into 4 parts
$P_1,P_2,P_3,P_4$, in clockwise order, where each part contains at least $m$
vertices. Ignoring up to $3$ vertices, we pick $m$ vertices in each part.  We
denote these vertices by $v_{i,j}$, where $i\in\{1,\ldots,4\}$ and
$j\in\{1,\ldots,m\}$. 

Let $D'_n$ be the spanning subgraph of $D_n$ whose edge
set consists of all edges with endpoints in distinct parts, except for those
with one endpoint in each of $P_2$ and $P_4$. Let $\F_n$ be a collection of
copies of plane $\Theta_1$ whose vertex set is $\{v_{1,
j},v_{2,k},v_{3,j+k},v_{4,k}\}$ and whose chord is $\{v_{1,j},v_{3,j+k}\}$, with
$j,k\in\{1,\ldots, m\}$; here each second index is modulo $m$.
It is easy to check that this is a $\Theta_1$-packing of $D'_n$ that covers all
but at most $3n$ edges.

Note that $D_n\setminus D'_n$ consists of three
complete components, one induced by $P_2\cup P_4$, and the others induced by
$P_1$ and $P_3$.  Thus, by induction, there exists a $\Theta_1$-packing of
$D_n$ such that the number of uncovered edges is at most $f(n/2)+2f(n/4)+3n
\leq f(n)$, as desired.
\end{proof}

\setcounter{thmB}{\getrefnumber{thm:ConvexGeometricGraphs}}
\addtocounter{thmB}{-1}

\begin{thmB}
If $G$ is a Hamiltonian graph, then $\P^*(G)$ is not geometric-packable unless $G$ is $C_3$. Further, $\P^*(C_3)$ is geometric-packable.
\end{thmB}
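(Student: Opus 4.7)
The plan is to reduce most cases to Theorem~\ref{thm:CyclesWithChords} by specialising to the convex sequence $\K_n$, and then to treat the residual graphs via a concrete non-convex drawing. The positive direction is immediate: in any general-position drawing every triple of vertices is in strictly convex position, so $\P^*(C_3)=\P(C_3)$, and the Steiner-triple-system argument (or R\"odl Nibble) as in the introduction already gives the packing.

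For the negative direction, let $G\neq C_3$ be planar Hamiltonian. If $G$ is not outerplanar, $\P^*(G)$ is empty and the statement is vacuous, so assume $G$ is outerplanar Hamiltonian; then $G$ has a unique plane convex drawing $\c G$ up to convex-isomorphism. I would first specialise the putative geometric-packability to the convex sequence $\K_n$: in $\K_n$ every vertex subset is already in strictly convex position, so every $\P^*(G)$-copy inside $\K_n$ is automatically convex-isomorphic to $\c G$. Hence an asymptotic $\P^*(G)$-packing of $\K_n$ is exactly an asymptotic $\c G$-packing of $\K_n$, and Theorem~\ref{thm:CyclesWithChords} immediately forces $G\in\{C_3,C_4,\Theta_1,\Theta_2,\Theta_3,\Theta_4\}$.

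The remaining task is to rule out $G\in\{C_4,\Theta_1,\Theta_2,\Theta_3,\Theta_4\}$, for which $\c G$ \emph{is} convex-packable and so the convex sequence is no longer an obstacle. For each such $G$ I would exhibit a non-convex sequence of drawings that $\P^*(G)$ cannot asymptotically pack, thus separating $\P^*(G)$ from $\P(G)$. My candidate is the \emph{centred drawing}: place $n-1$ vertices at the regular $(n-1)$-gon together with a single extra vertex $v_0$ strictly inside, perturbed slightly for general position. Since $v_0$ lies inside the convex hull of the outer vertices, any copy of $\P^*(G)$ containing $v_0$ must have its remaining $v(G)-1$ vertices lying in an open half-plane through $v_0$, i.e.\ within a short outer arc of fewer than $(n-1)/2$ consecutive positions. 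Consequently each such copy uses only ``short'' outer edges inside that arc and absorbs at most $v(G)-1\le 4$ of the $n-1$ edges incident to $v_0$.

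The \emph{main obstacle} is converting this local restriction into a quantitative shortage of covered edges: a naive argument at a single centre $v_0$ produces only $O(1)$ uncovered $v_0$-edges per $D_n$, which does not preclude asymptotic packability. My plan to upgrade is to iterate the construction by placing $\Omega(n)$ interior vertices on a concentric inner convex polygon, and to show via an edge-disjointness/parity argument at each interior vertex that $\Omega(n)$ of its incident edges must remain uncovered by any $\P^*(G)$-packing; summing over these interior vertices yields $\Omega(n^2)$ uncovered edges, bounding the packing ratio away from $1$. Making this counting uniform across all five graphs---especially for the $\Theta_i$, whose high-degree vertices can absorb three or four $v_0$-edges per copy---is the technically delicate step. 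Should the purely combinatorial argument prove too loose, a weighted analogue of Lemma~\ref{lem:notpackability}, adapted so that each $\P^*(G)$-copy contributes its perimeter on the convex polygon it spans inside $D_n$, is a natural fallback.
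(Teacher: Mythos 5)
Your positive direction and your first reduction are both sound and match the paper: restricting attention to the convex drawings $\K_n$, every copy from $\P^*(G)$ is the unique plane convex drawing of $G$, so the convex-nonpackability results (Propositions~\ref{prop:MaximalConvexSet} and~\ref{prop:Theta5}, equivalently the negative half of Theorem~\ref{thm:CyclesWithChords}) eliminate everything except $C_3$, $C_4$, $\Theta_1,\dots,\Theta_4$. The gap is in the step that is the actual content of this theorem: ruling out $C_4$ and the $\Theta_i$, which \emph{are} convex-packable, so some genuinely non-convex sequence of drawings must be produced and analyzed. You do not produce one. You correctly observe that a single interior vertex yields only $O(1)$ uncovered edges, but your proposed upgrade --- $\Omega(n)$ vertices on a concentric inner polygon, with an unspecified ``edge-disjointness/parity argument'' giving $\Omega(n)$ uncovered edges at each inner vertex --- is asserted, not proved, and is doubtful as stated: with two concentric regular polygons there are abundant convex quadrilaterals mixing inner and outer vertices (e.g.\ two adjacent outer vertices together with two nearby inner vertices form a convex trapezoid), and nothing in your sketch shows that a constant fraction of the edges at an inner vertex must go uncovered. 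You have flagged this yourself as the ``technically delicate step'' and offered only a fallback; that is precisely the missing idea.

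The paper's construction is different and is what makes the proof work. Take $n/3$ points $v_i=(i,\,i^2/(2n^2))$ --- a nearly degenerate convex arc --- and form three rotated copies $A,B,C$ at $120^\circ$ about the origin (Figure~\ref{fig:GeneralC4}). The geometric punchline is that for any two points of $A$ together with one point of $B$ and one of $C$, the inner point of $A$ lies inside the convex hull of the other three; hence every quadruple in strictly convex position meets at most two of $A,B,C$, and consequently every convex $C_4$ uses at least two edges internal to the sets $A,B,C$. But only $3\binom{n/3}{2}=(1/3+o(1))\binom{n}{2}$ edges are internal, so any $\P^*(C_4)$-packing has at most $(1+o(1))n^2/12$ copies and covers at most a $2/3$ fraction of the edges. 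The same drawing, with the same half-plane/convex-hull observation, handles $\Theta_1,\dots,\Theta_4$. If you want to salvage your write-up, replacing the concentric-polygon idea with this three-arm construction (or proving the key claim for your own construction) is what is required.
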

\begin{proof}
\def\rad{.08cm}
\begin{figure}[h]
    \centering
        \begin{tikzpicture}[rotate=120, yscale=-1]
    \foreach \i in {1,...,3} {
    \coordinate (N\i) at (\i*360/3:2.5);
    }
    \foreach \i in {4,...,6} {
    \coordinate (N\i) at (\i*360/3+4:2);
    }
    \foreach \i in {7,...,9} {
    \coordinate (N\i) at (\i*360/3+8:1.5);
    }
    \foreach \i in {10,...,12} {
    \coordinate (N\i) at (\i*360/3+12:1);
    }
    \foreach \i in {1,...,12} {
    \foreach \j in {1,...,12} {
    \draw[gray] (N\i) -- (N\j);
    }
    }
     \foreach \i in {1,...,12} {      
        \fill[] (N\i) circle (\rad);
        \fill[white] (N\i) circle (.7*\rad);
        }
    \draw[red, very thick] (N4)--(N6) -- (N9) -- (N7)--(N4);
     \fill[red] (N4) circle (\rad);
      \fill[red] (N6) circle (\rad);
       \fill[red] (N9) circle (\rad); 
      \fill[red] (N7) circle (\rad);   
    \end{tikzpicture}
    \ \ \ \ \ \ \ \ \ \ \ \ 
        \begin{tikzpicture}[rotate=120, yscale=-1]
    \foreach \i in {1,...,3} {
    \coordinate (N\i) at (\i*360/3:2.5);
    }
    \foreach \i in {4,...,6} {
    \coordinate (N\i) at (\i*360/3+4:2);
    }
    \foreach \i in {7,...,9} {
    \coordinate (N\i) at (\i*360/3+8:1.5);
    }
    \foreach \i in {10,...,12} {
    \coordinate (N\i) at (\i*360/3+12:1);
    }
    \foreach \i in {1,...,12} {
    \foreach \j in {1,...,12} {
    \draw[gray] (N\i) -- (N\j);
    }
    }
     \foreach \i in {1,...,12} {      
        \fill[black] (N\i) circle (\rad);
        \fill[white] (N\i) circle (.7*\rad);
        }
      \draw[red,very thick] (N4)--(N7) -- (N10) -- (N9)--(N4); 
      \fill[red] (N4) circle (\rad);
        \fill[red] (N7) circle (\rad);
          \fill[red] (N10) circle (\rad);
            \fill[red] (N9) circle (\rad);
    \end{tikzpicture}
    \caption{Two convex $C_4$'s in $D_{12}$.}
    \label{fig:GeneralC4}
\end{figure}
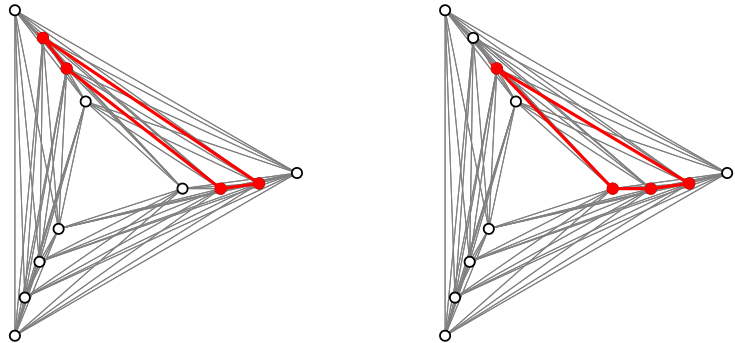

$\P^*(C_3)$ is geometric-packable due to the existence of Steiner Triple Systems. Propositions~\ref{prop:MaximalConvexSet} and~\ref{prop:Theta5} 
imply the geometric-nonpackability of $\P^*(G)$ for all Hamiltonian graph $G$ unless $G$ is one of $C_3$, $C_4$, $\Theta_1$, $\Theta_2$,
$\Theta_3$ and $\Theta_4$. What remains to be done is to construct $D_1,D_2,\dots$, a sequence of drawings of the complete graph, such that it cannot be packed by $P^*(G)$ when $G$ is one of $C_4$, $\Theta_1$, $\Theta_2$,
$\Theta_3$ and $\Theta_4$.

We show that the generalization of the graph in
Figure~\ref{fig:GeneralC4}, which we define below, is not $\P^*(C_4)$-packable.
We only consider drawings $D_n$ when $n \equiv 0 \mod 3$, since we can get
$D_{n-1}$ and $D_{n-2}$ by deleting one or two vertices from $D_n$.

For each $i \in\{1,\ldots,\frac{n}{3}\}$, we let $v_i:=(i,i^2/(2n^2))$. 
The importance of the second coordinate is simply to keep the points
$v_1,\ldots,v_{n/3}$ in general convex position.
We form $v_{i+\frac{n}{3}}$ and $v_{i+\frac{2n}{3}}$ from $v_{i}$
by rotating (around (0,0)) $120$ and $240$ degrees counterclockwise. Let
$A:=\{v_1, \dots, v_\frac{n}{3}\}$, $B:=\{v_{\frac{n}{3}+1}, \dots,
v_\frac{2n}{3}\}$, and $C:=\{v_{\frac{2n}{3}+1}, \dots, v_n\}$. We claim that we
cannot pack many convex $C_4$'s into the drawing on $v_1, \dots,v_n$. The
reason is that each convex $C_4$ has at least two edges within the
sets $A,B,C$. 
To see this, consider a set $S$ with two points from $A$, one point from $B$,
and one point from $C$.
The key observation is that the more inner point from $A$ is not on the convex
hull of $S$.  (This can be verified by finding the line determined by the two points in $A$ and showing that it intersects the interior of the line segment determined by the points in $B$ and $C$.  But we omit these routine calculations.)
Hence, in every convex $C_4$, all four points must come from at most two
of $A$, $B$, and $C$. It is easy to check that the only types of convex $C_4$'s
are the two highlighted in Figure $\ref{fig:GeneralC4}$. But at most
$3\binom{\frac{n}{3}}{2}$ edges go between different sets from $A,B,C$.
Hence, each packing of convex $C_4$'s has at most $\frac{n^2}{3 \cdot
4}(1+o(1))$ copies of $C_4$; so it covers at most $\frac{2}{3}(1+o(1))$
of all edges.  %

By similar arguments, $\{D_n\}_{n\ge1}$ cannot be asymptotically packed by $P^*(G)$ when $G$ is any of $\Theta_1$, $\Theta_2$, $\Theta_3$ and $\Theta_4$.
\end{proof}

\section{Strongly Packable CGGs}
\label{sec:StronglyPackable}

In this section, we introduce strong packability.  This strengthens
(unsurprisingly) the notion of convex-packability, and gives an explicit packing that typically covers all but $O(n)$ edges.  Theorem~\ref{thm:GeometricGraphs} shows that
$\C_4$ is convex-packable.  But to start this section we give a simpler proof, which motivates the machinery we develop and employ in the rest of the section.

For each even integer $n$, 
and all integers $\ell$ such that $1\le l< n/8$, 
we take all copies of $\C_4$ with edges of lengths $2l,n/2-2l,2l-1,n/2-2l+1$,
in clockwise order; see
Figure~\ref{fig:ExplicitlyPackingC4}. Clearly these copies of $\C_4$ form a
$\C_4$-packing of $\K_n$ and the number of edges uncovered is $O(n)$; thus the
packing is almost perfect.

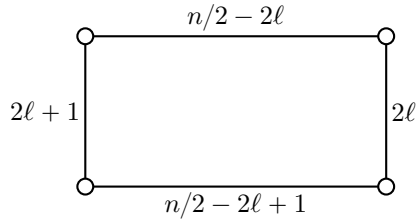
\begin{figure}[h]
    \centering
\begin{tikzpicture}[scale=2,thick]
\tikzstyle{uStyle}=[shape = circle, minimum size = 6.0pt, inner sep = 0pt,
outer sep = 0pt, draw, fill=white]
\tikzstyle{lStyle}=[shape = rectangle, minimum size = 6.0pt, inner sep = 0pt,
outer sep = 2pt, draw=none, fill=white]
\tikzset{every node/.style=uStyle}

\draw (0,0) node (A) {};
\draw (2,0) node (B) {};
\draw (2,-1) node (C) {};
\draw (0,-1) node (D) {};

\draw (A) edge[above] node[lStyle]{\footnotesize{$n/2-2\ell$}} (B);
\draw (B) edge[right] node[lStyle]{\footnotesize{$2\ell$}} (C);
\draw (C) edge[below] node[lStyle]{\footnotesize{$n/2-2\ell+1$}} (D);
\draw (D) edge[left] node[lStyle]{\footnotesize{$2\ell+1$}} (A);

\end{tikzpicture}
    \caption{A typical $\C_4$ in an asymptotic $C_4$-packing of the convex complete graphs.}
    \label{fig:ExplicitlyPackingC4}
\end{figure}

To generalize the example above, that $\C_4$ is convex-packable, we need the
following definitions.
Let $G$ be a subgraph of $\K_n$. The \emph{length set} of $G$, denoted 
$L_{G}$, is a subset of 
$\{1,2,\dots,\lfloor n/2\rfloor\}$
such that $i\in L_{G}$ if and only if some edge $e\in G$ has length
in $\K_n$ equal to $i$.

\begin{dfn}\label{def:length_sequence}
A CGG $G$ is \emph{strongly packable} 
if for each $n\geq 1$ there exists $S_n$, a set of subsets of 
$\{1,2,\dots,\lfloor n/2\rfloor\}$,
with the following three properties:
\begin{enumerate}
    \item All sets in $S_n$ are pairwise disjoint and have size $e(G)$; and
    \item For all $A\in S_n$, there exists a subgraph $G_A$ of $\K_n$ that is
convex-isomorphic to $G$ such that $L_{G_A}=A$; and
    \item $\sum_{A\in S_n}|A|=|S_n|  e(G)=(1-o(1))  n/2$.
\end{enumerate}
Further, we say $\{S_n\}_{n\ge 1}$ is a sequence of \emph{packable length-set collections of $G$}.
\end{dfn}
\subsection{Strong Packability and Reflection}
In this subsection, we prove two easy lemmas about strong packability.
First we show that every strongly packable CGG is also convex-packable.
\begin{lem}\label{lem:StronglyPackable}
Let $G$ be a CGG. If $G$ is strongly packable, then $G$ is also convex-packable.
\end{lem}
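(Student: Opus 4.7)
The plan is to exploit the cyclic symmetry of $\K_n$: for each $A\in S_n$ the witness $G_A$ has $e(G)$ edges of pairwise distinct lengths (because $|A|=e(G)=|L_{G_A}|$), so rotating $G_A$ through all $n$ positions of $\K_n$ should generate an orbit of $n$ edge-disjoint copies of $G$ that together exhaust exactly the edges whose length lies in $A$. Concretely, let $\rho$ be the automorphism of $\K_n$ sending $v_i\mapsto v_{i+1}$ (indices mod $n$) and set
\[
  \F_n \;:=\; \bigcup_{A \in S_n} \bigl\{\rho^k(G_A) : 0 \le k \le n-1\bigr\}.
\]
Since $\rho$ preserves cyclic order and edge length, each member of $\F_n$ is a subgraph of $\K_n$ convex-isomorphic to $G$.

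The first step is to verify that $\F_n$ is edge-disjoint. Suppose $\rho^k(G_A)$ and $\rho^{k'}(G_{A'})$ share an edge of length $\ell$; then $\ell\in A\cap A'$, so $A=A'$ because the sets in $S_n$ are pairwise disjoint. Within $G_A$ the edges have distinct lengths, so a unique edge $e$ of $G_A$ has length $\ell$, and $\rho^{k-k'}$ fixes $e$. When $\ell<n/2$ the stabilizer of $e$ in $\Z_n$ is trivial, forcing $k=k'$; the same computation shows that as $k$ varies over $\{0,\dots,n-1\}$ the copies $\rho^k(G_A)$ are pairwise distinct, so the orbit has full size $n$.

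The one exception is $\ell=n/2$ with $n$ even, where the stabilizer of $e$ has order two and two different rotations could share that single diameter edge. I would sidestep this by discarding, if necessary, the single set in $S_n$ containing $n/2$; this reduces $|S_n|\,e(G)$ by at most $e(G)=O(1)$, which does not disturb the estimate $|S_n|\,e(G)=(1-o(1))n/2$ from Definition~\ref{def:length_sequence}(3). This small fix is the only real obstacle; everything else is forced by the orbit structure of $\Z_n$ acting on $\K_n$.

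After this harmless adjustment, $\F_n$ is a genuine $G$-packing of $\K_n$ of size $n|S_n|$, covering
\[
  |\F_n|\,e(G)\;=\;n\,|S_n|\,e(G)\;=\;n\cdot(1-o(1))\frac{n}{2}\;=\;(1-o(1))\binom{n}{2}
\]
edges. Hence $p(\K_n,G)\,e(G)/e(\K_n)\to 1$, so $G$ is convex-packable.
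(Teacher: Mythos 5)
Your construction is exactly the paper's: for each $A\in S_n$ take a witness copy $G_A$ together with all $n$ rotations, and count covered edges by length class. The extra care you take in verifying edge-disjointness of the orbit (and the $\ell=n/2$ diameter case, which the paper glosses over and which your $O(1)$-discard handles correctly) only makes the same argument more rigorous.
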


\begin{proof}
Let $\{S_n\}_{n\ge 1}$ be a 
sequence of collections of sets that 
satisfy all three properties in Definition~\ref{def:length_sequence}, i.e.,
a sequence of packable length-set collections of $G$.  We construct a sequence of
$G$-packings $\{\F_n\}_{n\ge 1}$ that covers all but $o(n^2)$ edges of
$\{\K_n\}_{n\ge 1}$.
Fix an arbitrary positive
integer $n'$.  For each $A\in S_{n'}$, we add to $\F_{n'}$ the subgraph $G_A$ convex-isomorphic to $G$
in $\K_{n'}$ with lengths in $A$, as well as the $n-1$ copies of $G$ formed from
$G_A$ by rotations.

Each edge $e$ in $\K_{n'}$ of some length $l$, where $l\in\{1,\ldots,\lfloor
n'/2\rfloor\}$,
is covered by $\F_{n'}$ if and only if $l\in \bigcup_{A\in S_{n'}}A$.  Since
$\{S_n\}_{n\ge 1}$  is a sequence of packable length-set collections of $G$, Property~3 of
Definition~\ref{def:length_sequence} implies that the number of lengths not covered
by $\bigcup_{A\in S_n}A$ is $o(n)$.  Thus, the number of edges not covered
by $\{\F_n\}_{n\ge 1}$ is $o(n^2)$, as desired.
\end{proof}
Consider a CGG $G$ with vertex set $\{v_0,\ldots,v_{m-1}\}$. A CGG $\tilde{G}$ is a \emph{reflection}
 of $G$ along an edge $\{v_0,v_t\}$ if $\tilde{G}$ has the same vertex set
 as $G$ and a pair of vertices $\{v_i,v_j\}$ form an edge of $\tilde{G}$ if and only if
either (a) $\{i,j\}\in\{0,\ldots,t\}$ and $\{v_{t-i},v_{t-j}\}\in E(G)$ or
(b) $\{i,j\}\in\{0,\ldots,m-1\}\setminus\{1,\ldots,t-1\}$ and
 $\{v_{i},v_{j}\}\in E(G)$.  Reflections are illustrated in Figure~\ref{fig:PathToCaterpillar} (in Section~\ref{sec:Paths}).
\begin{lem}\label{lem:reflection}
If a CGG $G$ is strongly packable, then so is each of its reflections.
\end{lem}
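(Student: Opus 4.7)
The plan is to show that the same sequence $\{S_n\}_{n\ge 1}$ witnessing the strong packability of $G$ also witnesses the strong packability of $\tilde{G}$. Properties (1) and (3) of Definition~\ref{def:length_sequence} depend only on $S_n$ as a set system in $\{1,\ldots,\lfloor n/2\rfloor\}$, so they transfer automatically. All the work lies in verifying property (2): given $A\in S_n$ and a subgraph $G_A\subseteq\K_n$ convex-isomorphic to $G$ with $L_{G_A}=A$, I must construct a subgraph $\tilde{G}_A\subseteq\K_n$ convex-isomorphic to $\tilde{G}$ with $L_{\tilde{G}_A}=A$.

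The key idea is that reflection along the edge $\{v_0,v_t\}$ corresponds geometrically to reversing the first $t$ gaps between consecutive vertices of the embedding. Concretely, suppose $G_A$ places its labels $v_0,\ldots,v_{m-1}$ at cyclic positions $0=q_0<q_1<\cdots<q_{m-1}$ in $\K_n$, and write $a_i:=q_i-q_{i-1}$ for $1\le i\le m-1$ and $a_m:=n-q_{m-1}$ for the consecutive gaps. I would define $\tilde{G}_A$ by placing its vertex labels at the positions induced by the new gap sequence $a_t,a_{t-1},\ldots,a_1,a_{t+1},\ldots,a_m$, and then installing the edges prescribed by $\tilde{G}$. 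The identity map on vertex labels is then a convex isomorphism from this embedding onto $\tilde{G}$ by construction, since cyclic order is preserved and edges are copied by fiat.

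What remains is a short arc-sum calculation. For an edge $\{v_i,v_j\}$ of $\tilde{G}$ with $i,j\in\{0,\ldots,t\}$ (reflection case (a)), the arc sum $\sum_{k=i+1}^{j}a_{t+1-k}$ in the new embedding equals $\sum_{k=t-j+1}^{t-i}a_k$ after reindexing, which is precisely the arc sum of the corresponding edge $\{v_{t-j},v_{t-i}\}\in E(G)$ in $G_A$. For an edge $\{v_i,v_j\}$ of $\tilde{G}$ with $i,j\in\{0,t,t+1,\ldots,m-1\}$ (reflection case (b)), the arc sum is unchanged by the block reversal because the first $t$ gaps still sum to $q_t$ and every later gap is fixed. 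Since the length of an edge in $\K_n$ is the shorter of its arc sum and its complement to $n$, the multisets of edge lengths in $\tilde{G}_A$ and $G_A$ coincide, giving $L_{\tilde{G}_A}=L_{G_A}=A$ as required.

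I foresee no substantive obstacle; the reflection is tailored so that a block-reversal of gaps preserves the relevant arc sums, and the proof is essentially bookkeeping. The mildest subtlety is correctly handling the wrap-around arc (which involves $a_m$) for edges incident to $v_0$ in case (b), but this is immediate since the total cyclic perimeter $n$ and the tail gaps $a_{t+1},\ldots,a_m$ are untouched by the reversal.
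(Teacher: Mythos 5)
Your proof is correct and follows the same route as the paper: keep the same sequence $\{S_n\}$ and, for each $A\in S_n$, replace the copy $G_A$ by its reflection inside $\K_n$, noting that the length set is unchanged. The paper dismisses the length-preservation step as ``clear,'' whereas you supply the gap-reversal bookkeeping explicitly; this is a welcome elaboration but not a different argument.
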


\begin{proof}
Let $\tilde{G}$ be a reflection of $G$ and let $\{S_n\}_{n\ge 1}$ be a sequence of
collections of packable length-sets of $G$.  Fix an arbitrary positive integer
$n'$.  For each $A\in S_{n'}$, let $H$ be a copy of $G$ in $\K_{n'}$ such that
$L_H=A$.  Clearly $\K_{n'}$ also contains a copy $\tilde{H}$ of $\tilde{G}$
such that $L_{\tilde{H}}=L_H=A$ (which is the reflection of $H$ along the image of the edge of the reflection from $G$ to $\tilde{G}$). Therefore,
$\{S_n\}_{n\ge 1}$ is also a sequence of collections of packable length-sets of
$\tilde{G}$. Hence $\tilde{G}$ is also strongly packable.
\end{proof}

\subsection{Proof of Theorem~\ref{thm:CyclesWithChords}}
\label{sec:CyclesWithChords}
In this section we give more examples of strong packability.
Typically, in all copies of $G$ we assign each given edge $f$ lengths with the
same residue modulo $e(G)$.  This approach allows us to coordinate lengths of
various edges to ensure that the edges form the desired cycles in all copies of $G$.
Here we prove Theorem~\ref{thm:CyclesWithChords}; for convenience, we restate
it.


\setcounter{thmB}{\getrefnumber{thm:CyclesWithChords}}
\addtocounter{thmB}{-1}

\begin{thmB}
All plane Hamiltonian CGGs are not convex-packable, except for the two plane cycles $\C_3$ and $\C_4$ and the
four CGGs $\Theta_1,\Theta_2,\Theta_3$ and $\Theta_4$ shown in
Figure~\ref{fig:CyclesWithChords}, which are all convex-packable.
\end{thmB}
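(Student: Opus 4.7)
The negative direction of Theorem~\ref{thm:CyclesWithChords} follows directly from Proposition~\ref{prop:Theta5}. For the positive direction, Theorem~\ref{thm:GeometricGraphs} already establishes geometric-packability of $\P(\C_3)$, $\P(\C_4)$, and $\P(\Theta_1)$; since each of $\C_3$, $\C_4$, and $\Theta_1$ admits a unique plane CGG drawing, the remark following Definition~\ref{def:length_sequence} yields convex-packability of these three. Thus the remaining task is to prove convex-packability of $\Theta_2$, $\Theta_3$, and $\Theta_4$, and for this I would invoke Lemma~\ref{lem:StronglyPackable} and show that each is strongly packable.

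For each $\Theta_i$ with $i \in \{2,3,4\}$, the plan is to construct an explicit sequence of packable length-set collections in the sense of Definition~\ref{def:length_sequence}. I would parametrize the intended convex copies of $\Theta_i$ in $\K_n$ by an integer $\ell$ ranging over an interval of length $\Theta(n/e(\Theta_i))$, and assign cyclic gap lengths $a_1,\ldots,a_k$ along the Hamiltonian cycle (with $k=5$ for $\Theta_2$ and $k=6$ for $\Theta_3,\Theta_4$) summing to $n$. The $e(\Theta_i) \in \{7,9,9\}$ edge lengths of the $\ell$-th copy are then the gaps $a_j$ together with the chord lengths, each of which is a specified sum of consecutive gaps determined by the chord structure of the drawing. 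I would take each $a_j$ to be an affine function of $\ell$, with positive and negative leading coefficients chosen so that the sum is identically $n$; this makes every edge length, including the chord lengths, affine in $\ell$. Following the paradigm stated at the start of this section, I would assign each edge a fixed residue modulo $e(\Theta_i)$ so that the edge lengths within any single copy occupy all $e(\Theta_i)$ residue classes, so that as $\ell$ sweeps through its range the length-sets pack $\{1,\ldots,\lfloor n/2\rfloor\}$ residue-class by residue-class.

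The main obstacle is verifying, case by case, that the length-sets produced are pairwise disjoint and together cover $(1-o(1))\cdot n/2$ of the integers in $\{1,\ldots,\lfloor n/2\rfloor\}$. Disjointness forces each affine expression in $\ell$ to be strictly monotone, and coverage forces the images of these expressions to tile $\{1,\ldots,\lfloor n/2\rfloor\}$ modulo $e(\Theta_i)$. The constraint that chord lengths equal specific gap-sums (imposing $2$, $3$, and $3$ linear relations for $\Theta_2$, $\Theta_3$, $\Theta_4$ respectively) tightens with the number of chords and must be reconciled with the residue assignments; this is the delicate part. In particular, $\Theta_3$ and $\Theta_4$ share the underlying $6$-cycle but differ in chord placement, so the residue assignments and affine parametrizations must be designed separately for each, compatibly with the particular chord pattern visible in Figure~\ref{fig:CyclesWithChords}.

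Once a valid parametric length-set family is exhibited for each of $\Theta_2$, $\Theta_3$, $\Theta_4$, the three conditions of Definition~\ref{def:length_sequence} can be verified by direct arithmetic checks: the size of each length-set equals $e(\Theta_i)$ by construction, convex-isomorphism with $\Theta_i$ is guaranteed by the gap-sum and chord-sum relations, and pairwise disjointness together with the $\Theta(n/e(\Theta_i))$ range of $\ell$ yield the asymptotic coverage. Strong packability of each $\Theta_i$ then implies convex-packability via Lemma~\ref{lem:StronglyPackable}, which together with the three cases $\C_3$, $\C_4$, $\Theta_1$ handled above completes the proof.
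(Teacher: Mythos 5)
Your overall architecture is exactly the paper's: the negative direction via Proposition~\ref{prop:Theta5}, the cases $\C_3$, $\C_4$, $\Theta_1$ via Theorem~\ref{thm:GeometricGraphs} and uniqueness of the plane convex drawing, and the cases $\Theta_2$, $\Theta_3$, $\Theta_4$ via strong packability (Lemma~\ref{lem:StronglyPackable}) using length-vectors that are affine in a parameter and occupy distinct residue classes modulo $e(G)$ --- this is precisely the paper's strategy, down to the residue-class trick. The one genuine gap is that you never exhibit the length-vectors: for $\Theta_2$, $\Theta_3$, $\Theta_4$ the entire content of the positive direction \emph{is} the explicit construction, and you defer it, correctly identifying the reconciliation of the chord-sum relations with the residue assignments as ``the delicate part'' but not carrying it out. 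It is not obvious a priori that the $2$ (resp.\ $3$) linear chord constraints, monotonicity in $\ell$, the residue conditions, and the requirement that all lengths stay in $\{1,\ldots,\lfloor n/2\rfloor\}$ can be satisfied simultaneously; the paper resolves this by writing down concrete families --- for $\Theta_2$ (with $n=28x$) two families of seven-tuples totalling $2x-3$ vectors whose coordinates lie in distinct classes modulo $7$, and for $\Theta_3$ and $\Theta_4$ (which admit a common labeling) a single family of $x-1$ nine-tuples. Until you produce such vectors (or a general existence argument for them), the proof of convex-packability of $\Theta_2$, $\Theta_3$, $\Theta_4$ is a plan rather than a proof.
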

\begin{proof}
Propositions~\ref{prop:MaximalConvexSet} and~\ref{prop:Theta5} 
imply the packability of all plane Hamiltonian CGGs, except for $\C_3$, $\C_4$, $\Theta_1$, $\Theta_2$,
$\Theta_3$ and $\Theta_4$.
Recall that $\C_3$ is convex-packable due to the existence of Steiner Triple System, and that $\C_4$ and
$\Theta_1$ are convex-packable by Theorem~\ref{thm:GeometricGraphs}.

We now consider $\Theta_2$, $\Theta_3$, and $\Theta_4$. We will show
that they are all (strongly) packable.
To show that a CGG is strongly packable, it suffices to give a sequence of
packable length-set collections.   We label the edges of $\Theta_2$ as
in Figure~\ref{fig:Theta2WL}.
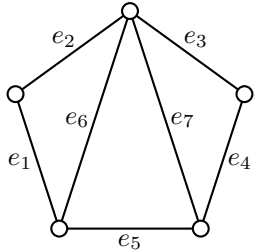
\begin{figure}[h]
    \centering
\begin{tikzpicture}[thick]
\tikzstyle{uStyle}=[shape = circle, minimum size = 6.0pt, inner sep = 0pt,
outer sep = 0pt, draw, fill=white]
\tikzstyle{lStyle}=[shape = rectangle, minimum size = 6.0pt, inner sep = 0pt,
outer sep = 2pt, draw=none, fill=none]
\tikzset{every node/.style=uStyle}

\foreach \i in {1,...,5}
\draw (-54+72*\i:1.6cm) node (v\i) {};

\foreach \i/\j/\where/\num in
{4/3/left/e_1,3/2/above/e_2~~,2/1/above/~~e_3,1/5/right/e_4,5/4/below/e_5,4/2/left/e_6,2/5/right/e_7}
\draw (v\i) edge[\where] node[lStyle]{\footnotesize{$\num$}} (v\j);

\end{tikzpicture}
    \caption{An edge labeling of $\Theta_2$ used to show it is strongly packable.}
    \label{fig:Theta2WL}
\end{figure}

Let $n=28x$, for some positive integer $x$.  We construct a collection of
length-sets for $\Theta_2$ in $\K_n$.  If $28\nmid n$, then we
simply ignore the edges incident to at most 27 vertices in $\K_n$. Since the
number of ignored edges is $O(n)$, they do not affect whether or not our packing covers
$(1-o(1))\binom{n}{2}$ edges.  Thus, we assume $n=28x$ without loss of
generality.

We extend the idea of a length-set to a
\emph{length-vector}.  Each length-vector for $\Theta_2$ is an ordered 7-tuple
of positive integers that form a length-set.  Here the $j$th entry of a
length-vector is the length in some copy of $\Theta_2$ of the image of edge
$e_j$, as labeled in Figure~\ref{fig:Theta2WL}.
We use the following $2x-3$ length-vectors:
$$
\begin{aligned}
(7x+1-7i, 2+14i, 5+14i, 7x+6-7i, 14x-14i-14,\\
7x+3+7i, 7x+11+7i),\ 0\le i \le x-2
\end{aligned}
$$
and
$$
\begin{aligned}
(-4+7i, 14x+12-14i, 14x+9-14i, 4+7i, 14i-21,\\
14x+8-7i, 14x+13-7i),\ 2\le i \le x-1.
\end{aligned}
$$
As in the proof of Lemma~\ref{lem:StronglyPackable}, for each length-vector above, we add to
$\F_n$ some copy of $G$ with edges lengths given by that length-vector, as well
as the $n-1$ non-trivial rotations of that copy of $G$.
%
Note that the 7 coordinates of each length-vector
lie in distinct residue classes modulo $7$. This observation makes it 
easy to check that the length-sets of these $2x-3$ length-vectors 
are pairwise disjoint. Thus the number of edge lengths in these length-vectors
is $7(2x-3)=(1-o(1))n/2$, and the number of edges covered by this
$\Theta_2$-packing is $(1-o(1))\binom{n}{2}$.
So $\Theta_2$ is strongly packable.

The proofs that $\Theta_3$ and $\Theta_4$ are strongly packable mirror that
above for $\Theta_2$.
So we just give the length-vectors (which are identical for $\Theta_3$ and
$\Theta_4$) and the edge labelings in Figure~\ref{fig:Theta3WL}.

\begin{figure}[h]
    \centering
\begin{tikzpicture}[thick, scale=.9]
\tikzstyle{uStyle}=[shape = circle, minimum size = 6.0pt, inner sep = 0pt,
outer sep = 0pt, draw, fill=white]
\tikzstyle{lStyle}=[shape = rectangle, minimum size = 6.0pt, inner sep = 0pt,
outer sep = 2pt, draw=none, fill=none]
\tikzset{every node/.style=uStyle}

\foreach \i in {1,...,6}
\draw (60*\i:1.6cm) node (v\i) {};

\foreach \i/\j/\where/\num in
{4/3/below/e_1~~,3/2/above/e_2~~~,2/1/above/e_6,1/5/right/e_7,5/4/below/e_4,6/5/below/~~~e_9/,1/6/above/~~~e_8,4/2/left/e_3,2/5/left/e_5}
\draw (v\i) edge[\where] node[lStyle]{\footnotesize{$\num$}} (v\j);
\draw (0,-2.5) node[draw=none] {~$\Theta_3$};

\begin{scope}[xshift=1.7in]

\foreach \i in {1,...,6}
\draw (60*\i:1.6cm) node (v\i) {};

\foreach \i/\j/\where/\num in
{4/3/below/e_1~~,3/2/above/e_2~~~,2/1/above/e_9,2/6/below/e_7~~,5/4/below/e_4,6/5/below/~~~e_6/,1/6/above/~~~e_8,4/2/left/e_3,2/5/left/e_5}
\draw (v\i) edge[\where] node[lStyle]{\footnotesize{$\num$}} (v\j);
\draw (0,-2.5) node[draw=none] {~$\Theta_4$};
\end{scope}

\begin{scope}[xshift=4.0in]
\begin{scope}[rotate=-45]
\foreach \i in {1,...,4}
\draw (90*\i:1.95cm) node (v\i) {};

\foreach \i/\j/\where/\num in
{1/2/above/e_4, 2/3/left/e_2, 3/4/below/e_1, 4/1/right/e_5, 2/4/left/e_3}
\draw (v\i) edge[\where] node[lStyle]{\footnotesize{$\num$}} (v\j);
\end{scope}
\draw (0,-2.5) node[draw=none] {~$\Theta_1$};
\end{scope}

\end{tikzpicture}

    \caption{Edge labelings showing that $\Theta_3$, $\Theta_4$, and
$\Theta_1$ are all strongly packable.}
    \label{fig:ThetaWL}
    \label{fig:Theta3WL}
\end{figure}
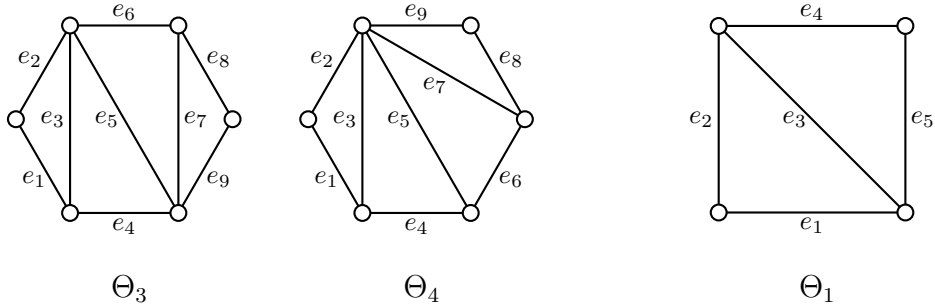

$$
\begin{aligned}
(2x-2i,5x+i,7x-i,2x+2i,9x-i,2x-1-2i,\\
7x+i+1,2x+1+2i, 5x-i),\ 1\le i \le x-1.
\end{aligned}
$$
\haftermath
\end{proof}
In fact, we can also show that $\Theta_1$ is strongly packable using the
edge labeling in Figure~\ref{fig:ThetaWL} and the following collections of
vectors (we assume, without loss of generality, that $n=20x$): 
$$(5i+2,5i+3, 10i+5, 10x-5i-1,10x-5i-4),\ 0\le i< x-1,$$ 
$$(5i+2,5i-2, 20x-10i, 10x-5i-1,10x-5i+1),\ x< i< 2x-1.$$

%

\subsection{Paths and Caterpillars}\label{sec:Paths}
Note that any plane path can be uniquely transformed into a convex plane
caterpillar by reflections (See Figure~\ref{fig:PathToCaterpillar}, where in
each step we reflect with respect to the bold red edge).  Thus, by
Lemma~\ref{lem:reflection}, a plane path is strongly packable if and only if
its corresponding convex plane caterpillar is strongly packable.

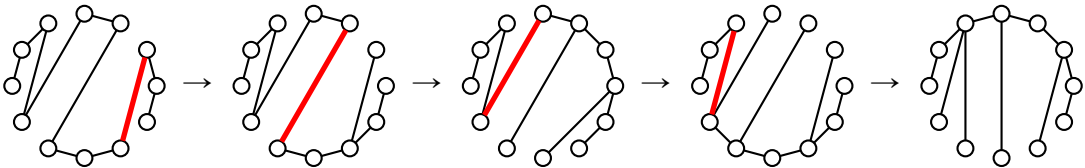
\begin{figure}[h]
    \centering
    \begin{tikzpicture}[thick, scale=.6]
    \tikzstyle{uStyle}=[shape = circle, minimum size = 6.0pt, inner sep = 0pt,
    outer sep = 0pt, draw, fill=white]
    \tikzstyle{lStyle}=[shape = rectangle, minimum size = 20.0pt, inner sep = 0pt,
outer sep = 2pt, draw=none, fill=none]
    \tikzset{every node/.style=uStyle}
    
    \foreach \i in {1,...,12}
     \draw (30*\i:1.6cm) node (v\i) {};
    
    \draw (v6)--(v5)--(v4)--(v7)--(v3)--(v2)--(v8)--(v9)--(v10) (v1)--(v12)--(v11);
    \draw[red, line width=2pt] (v1)--(v10);
    \draw (0: 2.5cm) node[lStyle]{$\rightarrow$};
    
    \begin{scope}[xshift=2.0in]
    \foreach \i in {1,...,12}
     \draw (30*\i:1.6cm) node (v\i) {};
    
    \draw (v6)--(v5)--(v4)--(v7)--(v3)--(v2) (v8)--(v9)--(v10); 
    \draw (v1)--(v10)--(v11)--(v12);
    \draw[red, line width=2pt] (v2)--(v8);
    \draw (0: 2.5cm) node[lStyle]{$\rightarrow$};
    \end{scope}
    
    \begin{scope}[xshift=4.0in]
    \foreach \i in {1,...,12}
     \draw (30*\i:1.6cm) node (v\i) {};
    
    \draw (v6)--(v5)--(v4)--(v7)  (v3)--(v2); 
    \draw (v8)--(v2)--(v1)--(v12); 
    \draw (v9)--(v12)--(v11)--(v10);
    \draw[red, line width=2pt] (v3)--(v7);
    \draw (0: 2.5cm) node[lStyle]{$\rightarrow$};
    \end{scope}
    
     \begin{scope}[xshift=6.0in]
    \foreach \i in {1,...,12}
     \draw (30*\i:1.6cm) node (v\i) {};
    
    \draw (v6)--(v5)--(v4) (v3)--(v7)--(v8); 
    \draw (v2)--(v8)--(v9)--(v10); 
    \draw (v1)--(v10)--(v11)--(v12);
    \draw[red, line width=2pt] (v4)--(v7);
    \draw (0: 2.5cm) node[lStyle]{$\rightarrow$};
    \end{scope}
    
    \begin{scope}[xshift=8.0in]
    \foreach \i in {1,...,12}
    \draw (30*\i:1.6cm) node (v\i) {};
    
    \draw (v6)--(v5)--(v4)--(v7) (v8)--(v4)--(v3); 
    \draw (v9)--(v3)--(v2)--(v1); 
    \draw (v10)--(v1)--(v12)--(v11);
    \end{scope}
    
    \end{tikzpicture}
    \caption{A plane path is transformed into a convex plane caterpillar.}
    \label{fig:PathToCaterpillar}
\end{figure}

By Proposition~\ref{prop:MaximalConvexSet}, we know that a plane path with $k$
edges is not convex-packable if it has more than $2\sqrt{k}$ extremal edges.
Equivalently, a plane convex caterpillar is not convex-packable if it has more than
$2\sqrt{k}-2$ edges in the spine. Let $f(k)$ be the maximum number of extremal edges of a convex-packable plane path with $k$ edges. So we have $f(k)\le 2\sqrt{k}$ for all positive integers $k$. We conjecture that this is sharp.
\begin{conj}
$f(k)=(2-o(1))\sqrt{k}$ as $k\rightarrow\infty$.
\end{conj}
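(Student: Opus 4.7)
The upper bound $f(k) \le 2\sqrt{k}$ already follows from Proposition~\ref{prop:MaximalConvexSet}, since the extremal edges of any plane path form a convex set of edges. The content of the conjecture is therefore the matching lower bound $f(k) \ge (2-o(1))\sqrt{k}$. By the path-to-caterpillar reduction of Section~\ref{sec:Paths} together with Lemmas~\ref{lem:StronglyPackable} and~\ref{lem:reflection}, it suffices to exhibit, for each large $k$, a plane convex caterpillar $C_k$ with $k$ edges, spine of length $s_k \ge (2-o(1))\sqrt{k}-2$, and leaves attached to both spine endpoints, that is strongly packable. Such a $C_k$ has $s_k + 2 = (2-o(1))\sqrt{k}$ extremal edges, which translates (after reflections) back into a plane path with the same number of extremal edges.

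My plan is to follow the explicit length-vector template used for $\Theta_1,\dots,\Theta_4$ in Section~\ref{sec:CyclesWithChords}. Set $s_k = 2\lfloor\sqrt{k}\rfloor - 2$ and take $C_k$ to be a \emph{double broom}: a spine $w_0 w_1 \cdots w_{s_k}$ with one leaf attached to $w_{s_k}$ and the remaining $k - s_k - 1$ leaves attached to $w_0$. Label the edges $e_1, \dots, e_k$ of $C_k$, restrict to $n$ a multiple of $k$ (discarding $O(n)$ edges), and assign to each edge $e_j$ the target residue class $j \pmod{k}$. A length-vector for $C_k$ is then an assignment of one length from each residue class to the corresponding edge. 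Modular disjointness yields Property~1 of Definition~\ref{def:length_sequence}, and an $(1-o(1))\lfloor n/2\rfloor / k$-sized family of length-vectors yields Property~3.

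Geometrically, I would realize each length-vector by placing the spine contiguously in a short arc $[0, D]$ of $\K_n$ with $D < n/2$, using the $s_k$ residue classes assigned to the spine, and placing the leaf vertices in the antipodal arc $(n/2, n)$, so that each leaf edge $(w_0, w_j)$ has length $n - Q_j$ (where $Q_j$ is its position). These leaf lengths are then strictly decreasing in $j$, matching the required convex ordering of $C_k$. Property~2 of Definition~\ref{def:length_sequence} reduces to the arithmetic inequality $D + \max_j l_j < n$, which is easy to verify for a single shift-indexed subfamily of vectors.

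The main obstacle is that a single shift-indexed family cannot simultaneously satisfy $D < n/2$ as the shift parameter ranges over all $O(n/k)$ values, because the spine sum $D$ grows linearly with the shift. To remedy this I would split into a bounded number of families (in the spirit of the two families used for $\Theta_2$), each handling a distinct range of the shift parameter and possibly using a different bijection between the edges of $C_k$ and the residue classes (for instance, a ``reflected'' family in which the roles of the $s_k$ smallest and $s_k$ largest residue classes are swapped, so that the spine sum stays below $n/2$). If this bookkeeping proves intractable for the sharp bound $s_k = 2\lfloor\sqrt{k}\rfloor - 2$, a natural fallback is to prove $s_k \ge (2-\epsilon)\sqrt{k}$ for each fixed $\epsilon > 0$, which provides enough slack in $D$ for the geometric constraints to be met across the full range of shifts, and then let $\epsilon = \epsilon(k) \to 0$ sufficiently slowly to recover the conjectured asymptotic.
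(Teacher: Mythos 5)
First, a point of orientation: the statement you are proving is a \emph{conjecture} that the paper explicitly leaves open. The paper only establishes $(1+o(1))\sqrt{2k}\le f(k)\le 2\sqrt{k}$, the lower bound coming from Theorems~\ref{thm:CaterpillarWithLegsAtEndpoints} and~\ref{thm:Caterpillar}, so there is no proof in the paper to compare against; your proposal must stand on its own. Your framework (reduce to caterpillars via reflections, then exhibit a packable length-set collection) is indeed the paper's machinery, and the upper-bound half of your argument is fine. But the construction at the heart of your lower bound cannot work, and the obstruction is structural rather than the ``bookkeeping'' issue you flag. Your double broom $C_k$ has $t_1=k-s_k-1=(1-o(1))k$ leaf edges $g_1,\dots,g_{t_1}$ at $w_0$, spine edges $e_1,\dots,e_{s_k}$, and one leaf edge $h$ at the far end. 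For \emph{every} $j$, the set $A_j=\{g_j,e_1,\dots,e_{s_k},h\}$ is a convex set of edges (the spine and $h$ lie entirely in the clockwise arc from $w_0$ to the $j$th leaf, so each edge of $A_j$ has all the others on one side of its line). Applying Lemma~\ref{lem:notpackability}(ii) to each $A_j$ and averaging over $j$ gives
$$
\sum_{i}f(e_i)+f(h)+\frac{1}{t_1}\sum_{j}f(g_j)\le 1,
$$
while Lemma~\ref{lem:notpackability}(i) applied to $\{g_1,\dots,g_{t_1}\}$ gives $\frac{1}{t_1}\sum_j f(g_j)\ge \frac{t_1}{4k}$ and applied to $\{e_1,\dots,e_{s_k},h\}$ gives $\sum_i f(e_i)+f(h)\ge \frac{(s_k+1)^2}{4k}$. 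Combining, $(s_k+1)^2\le 4k-t_1=3k+s_k+1$, so any convex-packable double broom satisfies $s_k\le(1+o(1))\sqrt{3k}$. Since $\sqrt{3}<2$, your target $s_k=2\lfloor\sqrt{k}\rfloor-2$ is unattainable for this shape: no choice of residue classes, shift families, or reflected length-vectors can produce the packable length-set collection you describe, and your fallback to $(2-\epsilon)\sqrt{k}$ fails for every $\epsilon<2-\sqrt{3}$. Intuitively, the $t_1\approx k$ leaf edges at $w_0$ must collectively absorb almost all length classes up to $n/2$, which forces the outermost such leaf to be long on average; that length is charged against the same convex set as the spine, eating a constant fraction of the budget $\sum_{e\in A}f(e)\le 1$.

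If you want to push past $\sqrt{2k}$, the place to look is caterpillars whose leaves sit at \emph{internal} spine vertices: a leaf edge at an internal vertex cannot be added to the spine's convex set (its line separates the two ends of the spine), so it escapes the averaging argument above. This is exactly the regime of Theorem~\ref{thm:Caterpillar}, but the hypothesis there (the permutation condition $\sigma_i\le\min\{a_i,a_{i+1}\}$, needed to keep consecutive fans from crossing) again caps the spine at $(1+o(1))\sqrt{2k}$. Closing the gap between $\sqrt{2k}$ (or $\sqrt{3k}$, if one can realize it) and $2\sqrt{k}$ is precisely what remains open.
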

In the rest of this section, we construct two types of strongly packable convex plane caterpillars, both show $f(k)\ge (1+o(1))\sqrt{2k}$ as $k\rightarrow\infty$. 

\begin{thm}\label{thm:CaterpillarWithLegsAtEndpoints}
Let $s$ be a positive integer. Let $G$ be a plane convex caterpillar with
vertices $v_1, \dots, v_{s}$, $v_{s+1}$ in clockwise order on the spine
such that each of $v_2,\ldots,v_{s}$ has no adjacent leaf. If
$s(s+3)\le 2e(G)$, then $G$ is strongly packable.

\end{thm}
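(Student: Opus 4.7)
The plan is to exhibit a sequence $\{S_n\}_{n\ge 1}$ of packable length-set collections satisfying Definition~\ref{def:length_sequence}, in the spirit of the length-vector constructions used to prove Theorem~\ref{thm:CyclesWithChords}. By Lemma~\ref{lem:reflection}, I may reflect $G$ to assume $b\le a$, where $a$ is the number of leaves at $v_1$ and $b$ the number at $v_{s+1}$. Write $m:=e(G)=s+a+b$, and by ignoring up to $O(n)$ edges assume $n=2mT$ for a positive integer $T$.

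The construction would partition $\{1,\dots,n/2\}$ into $m$ disjoint slots $I_r=[(r-1)T+1,\,rT]$ for $r=1,\dots,m$, and build one length-set $A_j$ per iteration $j\in\{1,\dots,T\}$ containing exactly one value from each slot. For each slot $r$ I would fix once and for all whether it is ``ascending'' (the value contributed to $A_j$ is $(r-1)T+j$) or ``descending'' (the value is $rT+1-j$); either way, slot $r$ places each element of $I_r$ into exactly one $A_j$, so the $A_j$ are automatically pairwise disjoint and their union is $\{1,\dots,mT\}$. Sorting the values of $A_j$ as $l_1<\cdots<l_m$, note that $l_r$ lies in slot $r$. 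Next I would assign the smallest $s$ values to the spine ($\alpha_k=l_k$), the next $b$ to the $\lambda$-edges in increasing order, and the largest $a$ to the $\mu$-edges in decreasing order. Unwinding the convex geometry of the caterpillar (its leaves lying only at $v_1$ and $v_{s+1}$) shows that this assignment yields a valid convex copy of $G$ in $\K_n$ precisely when
\[
R(j)\;:=\;\sum_{k=1}^{s}l_k \;+\; l_{s+b} \;+\; l_m \;<\; n.
\]

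A direct calculation shows that $R(j)$ is affine in $j$ with slope $2K-(s+2)$, where $K$ counts how many of the ``relevant'' slots---those in $\{1,2,\dots,s,\,s+b,\,m\}$---were declared ascending. Choosing $K$ equal to $\lfloor(s+2)/2\rfloor$ or $\lceil(s+2)/2\rceil$ minimizes the magnitude of the slope. The constant (or near-constant) value of $R(j)$ then simplifies, using $m=s+a+b$ together with the hypothesis $s(s+3)\le 2e(G)$, to a number strictly below $n=2mT$ provided $b\le\lfloor(s+1)/2\rfloor$; in this regime the primary family alone already produces pairwise disjoint, realizable length-sets covering $(1-o(1))\cdot n/2$ values.

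When $b>\lfloor(s+1)/2\rfloor$ the primary family no longer suffices, and I would supplement it with a second family of length-sets---for instance by routing the top element $l_m$ to $\lambda_b$ rather than $\mu_1$, or by flipping the ascending/descending assignment of a few slots---modeled on the two-family scheme appearing in the $\Theta_2$ construction of Theorem~\ref{thm:CyclesWithChords}. The hardest step is precisely this last piece of bookkeeping: verifying that the hypothesis $s(s+3)\le 2e(G)$ is tight enough that the two families can be combined so that (i) each individual length-set is realizable and (ii) the length-sets remain pairwise disjoint. Once this is done, the resulting collection satisfies Definition~\ref{def:length_sequence} and so $G$ is strongly packable.
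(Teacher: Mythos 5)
Your framework (strong packability via a single rotated family of length-sets, one value per slot of width $T=n/(2e(G))$) matches the paper's in spirit, and your analysis of your primary family is essentially right; but that analysis also shows the primary family only realizes the copies when $b\le\lfloor(s+1)/2\rfloor$, and the remaining cases are a genuine, unfilled gap. For instance, with $s=4$ and $a=b=5$ we have $s(s+3)=28=2e(G)$, so the theorem applies, yet $b=5>2$, your primary family's realizability condition $R(j)<n$ fails, and the ``second family'' you invoke (rerouting $l_{e(G)}$, flipping some slots) is exactly the step you acknowledge you have not carried out. As written this is not a proof: the hardest configurations --- leaves split roughly evenly between the two ends of the spine --- are precisely the ones left to the unverified bookkeeping.

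The source of the difficulty is structural: insisting that every leaf edge also take one value per slot forces the outermost leaf at one end to have length about $(s+b)T$ and at the other end about $n/2$, so $R(j)$ carries an extra additive $bT$ that the hypothesis $s(s+3)\le 2e(G)$ cannot absorb when $b$ is large. The paper's construction keeps the slot structure only for the $s$ spine edges (spine edge $i$ gets length $(i-1)T+j$) and gives the leaves \emph{consecutive} lengths instead: after reflecting so that $v_1$ carries $t_1$ leaves, $v_{s+1}$ carries $t_2$ leaves, and $t_1\le t_2$, the leaves at $v_1$ in copy $j$ receive the block $sT+(j-1)t_1+1,\dots,sT+jt_1$ and the leaves at $v_{s+1}$ receive the block $n/2-(j+1)t_2+1,\dots,n/2-jt_2$. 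Disjointness across $j$ still holds because each $j$ occupies fresh blocks, while the two extremal leaf lengths now sum to $sT+n/2+j(t_1-t_2)\le sT+n/2$, so the total arc used is below $\tfrac{s(s+3)}{2}T+n/2\le n$ for every $j$ at once --- one family, no case split. To complete your argument you would either need to exhibit and verify your second family in full, or (more cleanly) replace the per-slot assignment of leaf lengths by this consecutive-block assignment.
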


If we let $s(s+3)=2e(G)$, then we have a plane convex caterpillar with $s(s+3)/2$ edges among which $s+2$ edges are extremal. This implies that $f(k)\ge (1+o(1))\sqrt{2k}$. 
\begin{proof} 
Let $k:=e(G)$. Without loss of generality, assume $n$ is a multiple of $2k$
and let $x:={n}/(2k)$. We construct copies $G_m$ of $G$ in $\K_n$ for each $m\in
\{1,\ldots, x-1\}$ such that $\{L_{G_m}\}$ is a packable length-set collection of $G$.
Let $t_1$ and $t_2$ denote the numbers of leaves adjacent to $v_1$ and
$v_{s+1}$; by symmetry, we assume $t_2\ge t_1$.  Denote the leaf edges
incident to $v_1$ (resp. to $v_{s+1}$) by $e_{s+1},e_{s+2}, \dots,
e_{s+t_1}$ (resp. $e_{s+t_1+1}, e_{s+t_1+2}, \dots, e_{s+t_1+t_2}$).

 \begin{figure}[h]
    \centering
    \begin{tikzpicture}[thick, scale=.6]
    \tikzstyle{uStyle}=[shape = circle, minimum size = 6.0pt, inner sep = 0pt,
    outer sep = 0pt, draw, fill=white]
    \tikzstyle{lStyle}=[shape = rectangle, minimum size = 20.0pt, inner sep = 0pt,
outer sep = 2pt, draw=none, fill=none]
    \tikzset{every node/.style=uStyle}
        \foreach \i in {1,...,6}
        \draw (160-20*\i:4.0cm) node(v\i) {}; 
        \foreach \i in {1,...,6}
        \draw (160-20*\i:4.5cm) node[lStyle] {\footnotesize{$v_{\i}$}};
        \foreach \i in {1,...,5}
        \draw (150-20*\i:3.5cm) node[lStyle] {\footnotesize{$e_{\i}$}};

        \draw (v1)--(v2)--(v3)--(v4)--(v5)--(v6);
        
        \foreach \i in {7,8,9}
        \draw (45+20*\i:4.0cm) node(v\i) {}; 
        \foreach \i in {6,7,8}
        \draw (65+20*\i:4.7cm) node[lStyle] {\footnotesize{$e_{\i}$}};
        \foreach \i in {7,8,9}
        \draw (v1)--(v\i);
        
        \foreach \i in {10,...,13}
        \draw (200-20*\i:4.0cm) node(v\i) {}; 
        \foreach \i in {9,...,12}
        \draw (180-20*\i:4.7cm) node[lStyle] {\footnotesize{$e_{\i}$}};
        \foreach \i in {10,...,13}
        \draw (v6)--(v\i);
    \end{tikzpicture}
    \caption{An edge-labeling of a caterpillar where all leaves are adjacent to
either $v_1$ or $v_{s+1}$.}
    \label{fig:CaterpillarWithLegsAtEndpoints}
\end{figure}
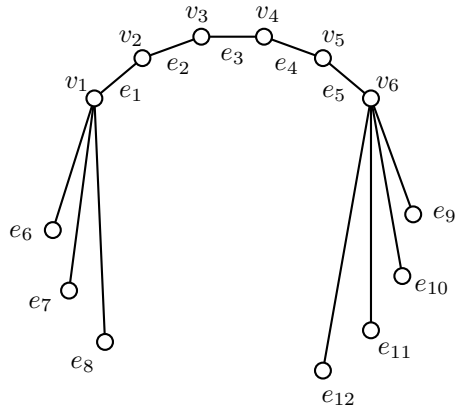
 
Let $G_m$ be a copy of $G$ in $\K_n$, and let
$\ell_m(e):=\ell(F_{G,G_m}(e))$, i.e., $\ell_m(e)$ is the length in $\K_n$ of the
edge of $G_m$ corresponding to the edge $e$ of $G$. 
We will show, for each $m\in\{1,\ldots,x-1\}$, that there exists $G_m$ with
$$
\ell_m(e_i)=
\left\{\!\!\!
\begin{array}{lll}
(i-1)x+m& &~~~~~i\in\{1,\ldots,s\}\\
s x+(m-1)t_1+(i-s)& &~~~~~i\in\{s+1,\ldots, s+t_1\}\\
n/2-(m+1)t_2+(i-s-t_1)& &~~~~~i\in\{s+t_1+1,\ldots, s+t_1+t_2\}.
\end{array}
\right.
$$
Let $S_n$ be the collection of sets $L_{G_m}$ for all $m\in \{1,\ldots,x\}$.
It is straightforward to check that these sets are pairwise disjoint, and
thus that $S_n$ is a packable length-set collection.  
In particular, spine edges all have lengths in $\{1,\ldots,s
x\}$; leaf edges incident to $v_1$ have lengths in $\{s x+1,\ldots,s
x+t_1x\}$, and leaf edges incident to $v_{s+1}$ have lengths in $\{s x+t_1
x+1,\ldots,n/2\}$.  Since $\sum_{m=1}^{x-1} |L_{G_m}| = (1-o(1))n/2$, we only need
to check that $G$ contains the desired copies $G_m$, with no edges
crossing.  It suffices to verify that the edges $e_{s+t_1}$ and
$e_{s+t_1+t_2}$ in each copy do not cross.  
Recall that $t_2\ge t_1$.
Thus, for each
$m\in\{1,\ldots,x-1\}$, because 
\begin{align*}
&\sum_{i=1}^{s}\ell_m(e_i)+\ell_{m}(e_{s+t_1})+\ell_{m}(e_{s+t_1+t_2})\\
=&\sum_{i=1}^s ((i-1)x+m)+s x+mt_1+n/2-(m+1)t_2+t_2\\
=&\frac{s(s-1)x}2+sm+sx+mt_1+n/2-mt_2\\
<&\frac{s(s+3)x}2+n/2 \le k(n/2k)+n/2 =~n.
\end{align*}
\aftermath

\end{proof}

\begin{thm}\label{thm:Caterpillar}
Let $l$, and $a_1, \dots$,$a_{s+1}$ be positive integers. Let $G$ be a plane
convex caterpillar with vertices $v_1, \dots, v_{s}$, $v_{s+1}$ in
clockwise order on the spine such that each $v_i$ is adjacent to $a_i$ leaves.
If there exists a permutation $\sigma_1, \dots,
\sigma_{s-1},\sigma_s$ of the set $1,\dots, s$ such that $\sigma_i\le \min\{a_i, a_{i+1}\}$
for all $i\in\{1,\ldots,s\}$, then $G$ is strongly packable.
\end{thm}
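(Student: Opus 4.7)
The plan is to extend the explicit length-vector construction used in Theorem~\ref{thm:CaterpillarWithLegsAtEndpoints} and in the $\Theta_2,\Theta_3,\Theta_4$ packings of Theorem~\ref{thm:CyclesWithChords}.  Let $k:=e(G)=s+\sum_{i=1}^{s+1}a_i$; without loss of generality assume $n=2kx$ for some positive integer $x$, since any remainder contributes only $O(n)$ uncovered edges.  For each $m\in\{1,\ldots,x-1\}$ I would construct an explicit copy $G_m$ of $G$ in $\K_n$, specified by a length-vector in $\mathbb{Z}_{>0}^k$, and then verify that $\{L_{G_m}\}$ satisfies the three properties of Definition~\ref{def:length_sequence}.

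The key role of the permutation $\sigma$ is to schedule the spine edges into disjoint residue bands.  Concretely, I would set the length of the spine edge $e_i$ in $G_m$ to be of the form $(\sigma_i-1)x+m$, placing spine-edge lengths in the ``small'' region $\{1,\ldots,sx\}$ partitioned into $s$ bands of width $x$, with varying $m$ sweeping out the residues inside each band.  The $i$-th inner gap between the spine images $u_i$ and $u_{i+1}$ in $\K_n$ then has a controlled size, and I would fill it with a specific allocation of $\sigma_i$ leaves of $v_i$ and $\sigma_i$ leaves of $v_{i+1}$, so that the resulting subgraph is convex-isomorphic to $G$.  The remaining leaves of each $v_i$ are placed in the outer arc between $u_{s+1}$ and $u_1$ and given long-length assignments near $n/2$, exactly in the spirit of the $v_1,v_{s+1}$ leaves in Theorem~\ref{thm:CaterpillarWithLegsAtEndpoints}.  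The hypothesis $\sigma_i\le\min\{a_i,a_{i+1}\}$ is used precisely to guarantee that the required leaves can be drawn from both $v_i$ and $v_{i+1}$ in each inner gap without exhausting either endpoint's supply; since $\sigma$ is a permutation, the total inner-gap demand $\sum_{i=1}^s\sigma_i=s(s+1)/2$ is distributable among the $v_i$ without conflict.

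To verify Definition~\ref{def:length_sequence} I would check three things.  First, the $k$ coordinates of each length-vector lie in distinct residue classes modulo $k$ by the choice of offsets, which immediately gives pairwise disjointness of the $L_{G_m}$ across both different edges and different $m$.  Second, a telescoping length-sum shows that the total edge length around $G_m$ equals $n$, so the prescribed gap sizes are realizable in $\K_n$.  Third, the coverage count is $(x-1)k=(1-o(1))n/2$, giving property (3).  The main obstacle I expect is the simultaneous bookkeeping of the three families of lengths (spine-edge bands, short inner-gap leaf lengths, and long outer-arc leaf lengths) so that all $k$ residue classes are used and no two lengths collide within a copy; the permutation condition $\sigma_i\le\min\{a_i,a_{i+1}\}$ provides exactly the combinatorial slack making this juggling possible, and without it some inner gap would be forced to borrow leaves from a vertex that has none left.
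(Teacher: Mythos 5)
Your high-level framework is the same as the paper's (explicit length-vectors, spine edge $e_i$ assigned length $(\sigma_i-1)x+m$ so that the spine lengths sweep out disjoint bands of width $x$, rotations, Definition~\ref{def:length_sequence}), but the heart of this particular theorem --- the placement of the leaf edges --- is handled incorrectly, and with it your explanation of why the hypothesis $\sigma_i\le\min\{a_i,a_{i+1}\}$ is needed. In $G$ the spine vertices occupy a contiguous interval of the cyclic order and \emph{all} leaves lie on the complementary arc (Figure~\ref{fig:CaterpillarWithLegsAtEndpointsTwo}); since convex-isomorphism preserves the quantity $\ell(x,y)$ computed inside the subgraph, every copy of $G$ in $\K_n$ must satisfy $\ell(u_i,u_{i+1})=1$, i.e.\ no vertex of the copy may lie between consecutive spine images. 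Your plan to ``fill the $i$-th inner gap with $\sigma_i$ leaves of $v_i$ and $\sigma_i$ leaves of $v_{i+1}$'' therefore produces subgraphs that are not convex-isomorphic to $G$ at all. Even on its own terms the allocation is infeasible: each interior spine vertex $v_i$ ($2\le i\le s$) would have to supply $\sigma_{i-1}+\sigma_i$ leaves to its two adjacent gaps, and $a_i\ge\sigma_{i-1}+\sigma_i$ does not follow from the hypothesis (take $s=2$, $(a_1,a_2,a_3)=(1,2,2)$, $(\sigma_1,\sigma_2)=(1,2)$: the hypothesis holds, but $v_2$ would owe $3$ leaves while $a_2=2$). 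Also, ``the total edge length around $G_m$ equals $n$'' is the right realizability check for the Hamiltonian CGGs $\Theta_2,\Theta_3,\Theta_4$, but not for a caterpillar, which bounds no cycle.

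What the paper actually does is keep every leaf on the outer arc and assign the $t$-th leaf edge of $v_i$ the long length $\ell_m(e_{i,t})=(s+\sum_{j=1}^{i-1}a_j)x+ik+ma_i+t$, so the leaf endpoints of $v_1,\ldots,v_{s+1}$ occupy consecutive blocks of that arc; disjointness of the sets $L_{G_m}$ then comes from these bands together with the arithmetic progressions in $m$ (step $a_i$), not from residues modulo $k$ as in the $\Theta$ constructions. The only geometric condition left to verify is that the last leaf of $v_i$ and the first leaf of $v_{i+1}$ do not cross, i.e.\ $\ell_m(e_{i,a_i})+\ell_m(e_i)\le \ell_m(e_{i+1,1})$, and it is exactly in this inequality --- where the spine length $\approx\sigma_i x$ must be absorbed by the offset $\approx a_i x$ between consecutive leaf blocks, with a correction term whose sign depends on $a_{i+1}-a_i$ --- that both bounds $\sigma_i\le a_i$ and $\sigma_i\le a_{i+1}$ are used. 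You would need to replace your leaf-placement scheme with one of this kind (leaves on the outer arc, crossing check between consecutive leaf blocks) for the proof to go through.
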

If we let $a_i=i$ and $\sigma_i=i$, then we have a packable plane convex caterpillar with $(s^2+5s+3)/2$ edges among which $s+2$ are extremal. This implies, again, $f(k)\ge (1+o(1))\sqrt{2k}$.

 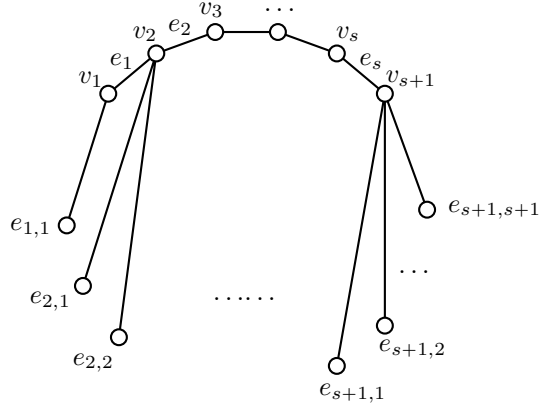
\begin{figure}[h]
    \centering
    \begin{tikzpicture}[thick, scale=.6]
    \tikzstyle{uStyle}=[shape = circle, minimum size = 6.0pt, inner sep = 0pt,
    outer sep = 0pt, draw, fill=white]
    \tikzstyle{lStyle}=[shape = rectangle, minimum size = 20.0pt, inner sep = 0pt,
outer sep = 2pt, draw=none, fill=none]
    \tikzset{every node/.style=uStyle}
        \foreach \i in {1,...,6}
        \draw (160-20*\i:4.0cm) node(v\i) {}; 
        \foreach \i in {1,2,3}
        \draw (160-20*\i:4.5cm) node[lStyle] {\footnotesize{$v_{\i}$}};
        \foreach \i in {1,2}
        \draw (150-20*\i:4.3cm) node[lStyle] {\footnotesize{$e_{\i}$}};
        
        \draw (60:4.5cm) node[lStyle] {\footnotesize{$v_{s}$}};
        \draw (40:4.5cm) node[lStyle] {\footnotesize{$~~v_{s+1}$}};
        
        \draw (80:4.5cm) node[lStyle] {\footnotesize{$\dots$}};
        \draw (50:4.3cm) node[lStyle] {\footnotesize{$e_s$}};
        
        \draw (v1)--(v2)--(v3)--(v4)--(v5)--(v6);
        
        \foreach \i in {7,8,9}
        \draw (45+20*\i:4.0cm) node(v\i) {}; 
        
        \draw (185:4.8cm) node[lStyle] {\footnotesize{$e_{1,1}$}};
        \draw (205:4.8cm) node[lStyle] {\footnotesize{$e_{2,1}$}};
        \draw (225:4.8cm) node[lStyle] {\footnotesize{$e_{2,2}$}};
        \draw (260:2cm) node[lStyle] {\footnotesize{$\dots$}};
        \draw (280:2cm) node[lStyle] {\footnotesize{$\dots$}};

        \draw (v1)--(v7) (v8)--(v2)--(v9);
        
        \foreach \i in {10,12,13}
        \draw (200-20*\i:4.0cm) node(v\i) {}; 
        
        \draw (0:5.5cm) node[lStyle] {\footnotesize{$e_{s+1,s+1}$}};
        \draw (-20:4cm) node[lStyle] {\footnotesize{$\dots$}};
        \draw (-40:4.8cm) node[lStyle] {\footnotesize{$e_{s+1,2}$}};
        \draw (-60:4.7cm) node[lStyle] {\footnotesize{$e_{s+1,1}$}};
        
        \foreach \i in {10,12,13}
        \draw (v6)--(v\i);
    \end{tikzpicture}
    \caption{An edge-labeling of a caterpillar where $v_i$ is adjacent to $i$ leaves for all $1\le i\le s+1$.}
    \label{fig:CaterpillarWithLegsAtEndpointsTwo}
\end{figure}

The proof of Theorem~\ref{thm:Caterpillar} is similar to that of
Theorem~\ref{thm:CaterpillarWithLegsAtEndpoints}.  The main difference is that
now we must ensure that the final leaf edge incident with $v_i$ does not cross
with the first leaf edge incident with $v_{i+1}$, for each $i\in\{1,\ldots,s+1\}$.
That is the role of the permutation $\sigma$, which controls the length of each
edge on the spine.  More precisely, letting $x:=n/(2k)$, in each copy of $G$ the
edge $e_i$ has length between $(\sigma_i-1)x$ and $\sigma_i x-1$.

\begin{proof} 
Let $k:=e(G)$. Without loss of generality, assume $n$ is a multiple of $2k$ and
let $x:={n}/(2k)$.  For each $m\in \{1,\ldots,x-k^2\}$, we will construct a copy
$G_m$ of $G$ in $\K_n$ such that $\{L_{G_m}\}$ is a packable length-set
collection of $G$. Let $e_1,\dots,e_s$ be the edges on the spine, where
$e_{i}=\{v_i,v_{i+1}\}$, and let $e_{i,1}, \dots,e_{i,a_i}$ be the
edges connecting $v_i$ to its $a_i$ leaves in counterclockwise order. Let $G_m$
be a copy of $G$ in $\K_n$, and let $\ell_m(e):=\ell(F_{G,G_m}(e))$; that is,
$\ell_m(e)$ is the length of the edge of $G_m$ corresponding to the
edge $e$ of $G$. We will show, for all $m\in\{1,\ldots, x-k^2-2\}$, that there
exists $G_m$ with
$$
\arraycolsep=1.4pt
\begin{array}{rlll}
\ell_m(e_i)=&(\sigma_i-1)x+m, & &~~~~~i\in\{1,\ldots,s\}\\
\ell_m(e_{i,t})=&(s+\sum_{j=1}^{i-1}a_j)x+i\cdot k+m\cdot a_i+t,& &
~~~~~i\in\{1,\ldots, s+1\}\mbox{ and }t\in\{1,\ldots,a_i\}.
\end{array}
$$
First, we verify that each edge length $\ell_m$ is less than $n/2$. Since $\sigma_i\leq s$ and $m < x$, we have $\ell_m(e_i)< \sigma_ix\le sx \le k(n/(2k))= n/2$; and for all $a_i \neq 0$ we have $$\ell_m(e_{i,t})\leq (s+\sum_{j=1}^{i-1}a_j)x+k^2+(x-k^2-1) a_i \leq e(G)x-1< \frac{n}{2}.$$ 
We need to check that edges $e_{i,a_i}$ and $e_{i+1,1}$ do not cross, for all
$i\in\{1,\ldots,s\}$ in each copy of $G$. It suffices to check that
$\ell_m(e_{i,a_i})+\ell_m(e_i)\le \ell_m(e_{i+1,1})$ for all $i\in\{1,\ldots,
s\}$ and all $m\in\{1,\ldots, x-k^2-2\}$. Note that
\begin{align*}
\ell_m(e_{i+1,1})-\ell_m(e_{i,a_i})-\ell_m(e_i)
=&(s+\sum_{j=1}^{i}a_j)x+(i+1)k+m a_{i+1}+1\\
&-(s+\sum_{j=1}^{i-1}a_j)x-i k-(m+1) a_i-(\sigma_i-1)x-m\\
=&(a_i-\sigma_i+1)x+(a_{i+1}-a_i-1)m+k-a_{i}+1\\ 
\end{align*}
If $a_{i+1}\ge a_i$, then (since $a_i\ge \sigma_i$, by assumption) we have
\begin{align*}
\ell_m(e_{i+1,1})-\ell_m(e_{i,a_i})-\ell_m(e_i)
& \ge x-m+k-a_{i}+1\ge k-a_{i}+1>0.
\end{align*}
On the other hand, if $a_{i+1}< a_i$, then (since $m\le x$) we have
\begin{align*}
\ell_m(e_{i+1,1})-\ell_m(e_{i,a_i})-\ell_m(e_i)
\ge&(a_i - a_{i+1}+1)(x-m)+k-a_{i}+1\ge k-a_{i}+1>0.
\end{align*}
Hence such $G_m$ exist for all $m\in\{1\ldots x-k^2\}$. Let $S_n$ be the collection of
sets consisting of $L_{G_m}$ for all $m\in\{1,\ldots x-k^2-2\}$. It is easy to
check that sets in $S_n$ are pairwise disjoint and
$\sum_{m=1}^x|L_{G_m}|=(1-o(1))n/2$. Therefore, $G$ is strongly packable.
\end{proof}

\section{Concluding Remarks}
\label{sec:conclude}
To conclude, we discuss some problems that remain open. Here we propose three directions for future research.

\begin{enumerate}
    \item Let $G$ be a graph. If $\P(G)$ is geometric-packable, then clearly $\P^*(G)$ can be asymptotically packed into convex complete graphs. Is the converse true? In particular, can we pack into any geometric drawings of the complete graph the sets $\P(\Theta_2)$, $\P(\Theta_3)$ or $\P(\Theta_4)$ (which are convex-packable by Theorem~\ref{thm:CyclesWithChords})?
    \item Lemma~\ref{lem:notpackability} is currently our only tool to prove that a CGG is not convex-packable. It would be enlightening to discover an example that fails to satisfy the hypotheses of Lemma~\ref{lem:notpackability} but is still not convex-packable.  Does such a graph exist?
    \item We ask for the maximum number of extremal edges $f(k)$ of a convex-packable plane path with $k$ edges. We have shown that $(1+o(1))\sqrt{2k}\le f(k)\le 2\sqrt{k}$ and we conjecture that the upper bound is true. We can also ask for the minimum number of extremal edges $g(k)$ of a convex-nonpackable plane path with $k$ edges (if there is one).  For the upper bound, we know that $g(k)\le 2\sqrt{k}$. On the other hand, it seems to be non-trivial even to find a lower bound that grows unbounded as a function of $k$.
\end{enumerate}
\section*{Acknowledgments}
Most research in this paper took place at the 2021 Graduate Research Workshop
in Combinatorics.  We heartily thank the organizers.  
We also thank 
Abdul Basit,
Austin Eide,
Bernard Lidick\'{y}, 
and
Shira Zerbib 
for helpful discussions on this problem.
\bibliographystyle{abbrv}

{\footnotesize{\bibliography{refs}}}

\begin{thebibliography}{10}

\bibitem{AHKVLPSW2017}
O.~Aichholzer, T.~Hackl, M.~Korman, M.~Van~Kreveld, M.~L{\"o}ffler, A.~Pilz,
  B.~Speckmann, and E.~Welzl.
\newblock Packing plane spanning trees and paths in complete geometric graphs.
\newblock {\em Information Processing Letters}, 124:35--41, 2017.

\bibitem{BK1979}
F.~Bernhart and P.~C. Kainen.
\newblock The book thickness of a graph.
\newblock {\em Journal of Combinatorial Theory, Series B}, 27(3):320--331,
  1979.

\bibitem{biniaz2020packing}
A.~Biniaz and A.~Garc{\'\i}a.
\newblock Packing plane spanning trees into a point set.
\newblock {\em Computational Geometry}, 90:101653, 2020.

\bibitem{BHRW2006}
P.~Bose, F.~Hurtado, E.~Rivera-Campo, and D.~R. Wood.
\newblock Partitions of complete geometric graphs into plane trees.
\newblock {\em Computational Geometry}, 34(2):116--125, 2006.

\bibitem{Kirkman1847}
T.~P. Kirkman.
\newblock On a problem in combinations.
\newblock {\em Cambridge and Dublin Mathematical Journal}, 2:191--204, 1847.

\bibitem{montgomery2021proof}
R.~Montgomery, A.~Pokrovskiy, and B.~Sudakov.
\newblock A proof of ringel’s conjecture.
\newblock {\em Geometric and Functional Analysis}, 31(3):663--720, 2021.

\bibitem{obenaus2021edge}
J.~Obenaus and J.~Orthaber.
\newblock Edge partitions of complete geometric graphs (part 1).
\newblock {\em arXiv preprint arXiv:2108.05159}, 2021.

\bibitem{Rodl1985}
V.~Rödl.
\newblock On a packing and covering problem.
\newblock {\em European Journal of Combinatorics}, 6(1):69--78, 1985.

\bibitem{TCAK2019}
H.~M. Trao, G.~L. Chia, N.~A. Ali, and A.~Kilicman.
\newblock On edge-partitioning of complete geometric graphs into plane trees.
\newblock {\em arXiv preprint arXiv:1906.05598}, 2019.

\bibitem{Wilson1976}
R.~M. Wilson.
\newblock Decompositions of complete graphs into subgraphs isomorphic to a
  given graph.
\newblock 1976.

\bibitem{Yuster2007}
R.~Yuster.
\newblock Combinatorial and computational aspects of graph packing and graph
  decomposition.
\newblock {\em Computer Science Review}, 1(1):12--26, 2007.

\end{thebibliography}
\end{document}